\documentclass[twoside,a4paper,12pt,centertags]{amsart}
\usepackage{amsmath,amssymb,verbatim,vmargin}
\usepackage[bookmarks=true]{hyperref}   
\usepackage{color}
\usepackage{tikz}
\usetikzlibrary{arrows, decorations.markings}

\theoremstyle{plain}
\newtheorem{thm}{Theorem}[section]

\newtheorem{cor}[thm]{Corollary}
\newtheorem{prop}[thm]{Proposition}

\newtheorem{alg}[thm]{Algorithm}

\theoremstyle{definition}

\newtheorem{ex}[thm]{Example}



\mathchardef\semic="303B
\newcommand{\dirac}{{\mathbf D}}

\newcommand{\wedg}{\mathbin{\scriptstyle{\wedge}}}
\newcommand{\lctr}{\mathbin{\lrcorner}}
\newcommand{\cc}[1]{{#1}^{\text c}}
\newcommand{\inv}[1]{{\widehat{#1}}}
\newcommand{\rev}[1]{\overline{#1}}
\newcommand{\R}{{\mathbf R}}
\newcommand{\C}{{\mathbf C}}

\newcommand{\mH}{{\mathcal H}}

\newcommand{\mF}{{\mathcal F}}

\DeclareMathOperator{\re}{Re}
\newcommand{\im}{\text{{\rm Im}}\,}
\newcommand{\sett}[2]{ \{ #1 \, \semic \, #2 \} }

\newcommand{\supp}{\text{{\rm supp}}\,}

\newcommand{\ran}{\textsf{R}}

\newcommand{\clos}[1]{\overline{#1}}

\newcommand{\barint}{\mbox{$ave \int$}}

\newcommand{\pv}{\text{{\rm p.v.\!}}}
\newcommand{\ev}{\text{ev}}
\newcommand{\od}{\text{od}}

\newcommand{\inc}{\text{inc}}

\def\barint_#1{\mathchoice
            {\mathop{\vrule width 6pt
height 3 pt depth -2.5pt
                    \kern -8.8pt
\intop}\nolimits_{#1}}%
            {\mathop{\vrule width 5pt height
3 pt depth -2.6pt
                    \kern -6.5pt
\intop}\nolimits_{#1}}%
            {\mathop{\vrule width 5pt height
3 pt depth -2.6pt
                    \kern -6pt
\intop}\nolimits_{#1}}%
            {\mathop{\vrule width 5pt height
3 pt depth -2.6pt
          \kern -6pt \intop}\nolimits_{#1}}}



\usepackage{color}

\definecolor{gr}{rgb}   {0.,   0.8,   0. }
\definecolor{bl}{rgb}   {0.,   0.5,   1. }
\definecolor{mg}{rgb}   {0.7,  0.,    0.7}





\makeatletter
\@namedef{subjclassname@2010}{%
  \textup{2010} Mathematics Subject Classification}
\makeatother

\begin{document}

\title[Boosting the Maxwell double layer potential]
{Boosting the Maxwell double layer potential using a right spin factor}
\author[Andreas Ros\'en]{Andreas Ros\'en$\,^1$}
\thanks{$^1\,$Formerly Andreas Axelsson}
\address{Andreas Ros\'en\\Mathematical Sciences, Chalmers University of Technology and University of Gothenburg\\
SE-412 96 G{\"o}teborg, Sweden}
\email{andreas.rosen@chalmers.se}
\dedicatory{Dedicated to the memory of Alan G. R. McIntosh}

\begin{abstract}
We construct new spin singular integral equations for solving scattering problems for Maxwell's equations, both against perfect conductors
and in media with piecewise constant permittivity, permeability and conductivity, improving and extending earlier formulations by the author.
These differ in a fundamental way from classical integral equations, which use double layer potential operators,
and have the advantage of having a better condition number, in particular in Fredholm sense and on Lipschitz regular interfaces, and do not suffer
from spurious resonances.
The construction of the integral equations builds on the observation that the double layer potential factorises into a boundary
value problem and an ansatz. We modify the ansatz, inspired by a non-selfadjoint local elliptic boundary condition
for Dirac equations. 
\end{abstract}

\keywords{Maxwell scattering, integral equation, spurious resonances}
\subjclass[2010]{45E05, 78M15, 15A66}

\maketitle

%
%
%
\section{Introduction}

%
%
The classical principal value double layer potential is the operator 
$$
   Kf(x)= 2 \pv \int_{\partial\Omega} (\nabla\Phi)(y-x)\cdot \nu(y) f(y) d\sigma(y), \qquad x\in\partial\Omega,
$$
acting on functions $f$ the boundary of a bounded Lipschitz domain $\Omega\subset\R^n$, and
using the Laplace fundamental solution $\Phi$ and the outward pointing unit normal vector field $\nu$ for its kernel.
Here $\sigma$ is standard surface measure and we choose to normalize by a factor $2$.
The method of boundary integral equations for solving the classical Dirichlet and Neumann boundary value problems for 
the Laplace equation on $\Omega$ is as follows.
To solve the Dirichlet problem with datum $g_1$, we solve 
$$
  f_1(x) +(Kf_1)(x)   = 2g_1(x), \qquad x\in\partial\Omega, 
$$
from which the harmonic function $u$ is obtained as the double layer potential with density $f_1$.
Similarly, to solve the Neumann problem with datum $g_2$, we solve 
$$
 - f_2(x) +(K^*f_2)(x)    = 2g_2(x), \qquad x\in\partial\Omega,
$$
from which the harmonic function $u$ is obtained as the single layer potential with density $f_2$.

For smooth domains, $K$ is a weakly singular integral, which gives a compact operator on many function space
and invertibility can be deduced by classical Fredholm theory.
For Lipschitz domains, $K$ is a singular integral operator (modulo the factor $\nu$), and its $L_p$ boundedness, $1<p<\infty$,
follows from the seminal work by Coifman, McIntosh and Meyer~\cite{CMcM}. 
For the rest of this paper, we will restrict attention to the most fundamental function space for singular integrals: $L_2$.
On a strongly Lipschitz domain, that is when $\partial\Omega$ is locally a Lipschitz graph, Rellich identities replaces the Fredholm arguments to
show that $I\pm K$ is a Fredholm operator on $L_2(\partial\Omega)$.

%
%

In this paper we derive a new integral equation for solving the Maxwell scattering problem again perfect conductors.
This makes use of Clifford algebra and an embedding of Maxwell's equations into a Dirac equation. 
To explain our ideas and results, we discuss in this introduction the double layer potential in the complex plane where all the main ideas are present but the algebra is simpler:
Clifford algebra simplifies to complex algebra, Maxwell's equations simplify to Cauchy--Riemann's equations, and Dirac solutions simplify
to analytic and anti-analytic functions.
To write $K$ in complex notation when dimension is $n=2$, we replace points $x$ and $y$ by complex numbers $z$ and $w$.
 In the integrand, $\nabla\Phi(y-x)$ becomes $(2\pi (\rev w-\rev z))^{-1}$ and $n(y)d\sigma(y)$ becomes $-idw$.
 With the expression $z\cdot w= \re(\rev zw)$ for the real inner product,
we obtain
\begin{equation}   \label{eq:K2D}
   Kf(z)= \re\left(\frac{1}{\pi i}\pv\int_{\partial\Omega} \frac{f(w)}{w-z}dw \right), \qquad z\in\partial\Omega.
\end{equation}
 Here we recognize the Cauchy integral from complex analysis, and we define the principal value Cauchy integral
 \begin{equation}   \label{eq:classyCauchy}
   Ef(z)= \frac{1}{\pi i}\pv\int_{\partial\Omega} \frac{f(w)}{w-z}dw , \qquad z\in\partial\Omega.
\end{equation}
Superficially, $E$ looks not much different from $K$. Clearly, boundedness of $E$ on a given curve implies boundedness
of $K$ on that curve. However, we are more concerned with invertibility of $I\pm K$, and in this case $E$ is a much simpler object
than $K$.
In fact
$$
    E^\pm = \tfrac 12(I\pm E)
 $$
are projections, although in general not orthogonal.
Here $E^+$ projects onto the interior Hardy space: the subspace consisting of traces of analytic functions in $\Omega$.
The null space of $E^+$, the subspace along which it projects, is the exterior Hardy space consisting of traces of analytic functions
in $\Omega^-$ which vanishes at $\infty$.
This explains the structure of $E$ itself. It is a reflection operator since $E=E^+-E^-$, which reflects the exterior Hardy space $E^-L_2=\ran(E^-)$
across the interior Hardy space $E^+L_2=\ran(E^+)$.
In particular we see that the spectrum is
$\sigma(E)= \{+1, -1\}$.

Hiding behind $K$ there is also a second reflection operator, namely pointwise complex conjugation
$$
  Nf(z)= \rev{f(z)},\qquad z\in\partial\Omega,
$$
which comes along with its two spectral projections $N^+f=\re f$ and $N^-f=\im f$.
Note that although we use complex numbers, we will still regard them mainly as vectors. In particular, we consider the
two basic reflecion operators $E$ and $N$ as real linear operators.
In terms of these projections, the operator $\tfrac 12(I+K)$ used for solving the Dirichlet problem, 
is the composition of two 
(restrictions of) projections, namely 
the interior Cauchy projection 
  \begin{equation}    \label{eqfactor1}
  E^+: N^+L_2 \to E^+L_2
  \end{equation}
  restricted to the subspace of real functions, and
   the real projection 
    \begin{equation}    \label{eqfactor2}
    N^+: E^+L_2 \to N^+L_2
    \end{equation}
  restricted to the interior Hardy subspace.
  This is readily seen from \eqref{eq:K2D}.
Equivalently, $\tfrac 12(1+K)$ is the compression $N^+E^+N^+$ of the Cauchy projection $E^+$ to the subspace $N^+L_2$ 
of real functions.

\begin{figure}
\centering
\begin{tikzpicture}[scale=2.2]
\tikzset{>=latex}
   \draw [ultra thick] (-1.5,0) -- (4,0) node[right] {$N^+L_2$};
   \draw [ultra thick] (0,-1.5) -- (0,3) node[right] {$N^-L_2$};
   \draw [ultra thick, dashed] (-1.26,-0.84) -- (3.33,2.22) node[right] {$E^+L_2$};
   \draw [ultra thick, dashed] (-1.41,0.47) -- (3.78,-1.26) node[right] {$E^-L_2$};
   \draw [->, scale=1.2, dashed] (2.97,0) -- (0.99,0.66);
   \node at (1.9,0.75) {ansatz};
   \draw [->, scale=1.2] (0.99, 0.66) -- (0.99, 0);
   \node at (1.41, 0.25) {bvp};
   \node at (3.6,-0.15) {$f$};
   \node at (1.3,-0.15) {$\tfrac 12(I+K)f$};
   \node at (1,0.9) {$E^+f$};
\end{tikzpicture}
\caption{Factorization of the operator  $\tfrac 12(I+K)$ into an ansatz (restriction of $E^+$) and the boundary value problem (bvp, restriction of $N^+$).}   \label{fig:alan}
\end{figure}
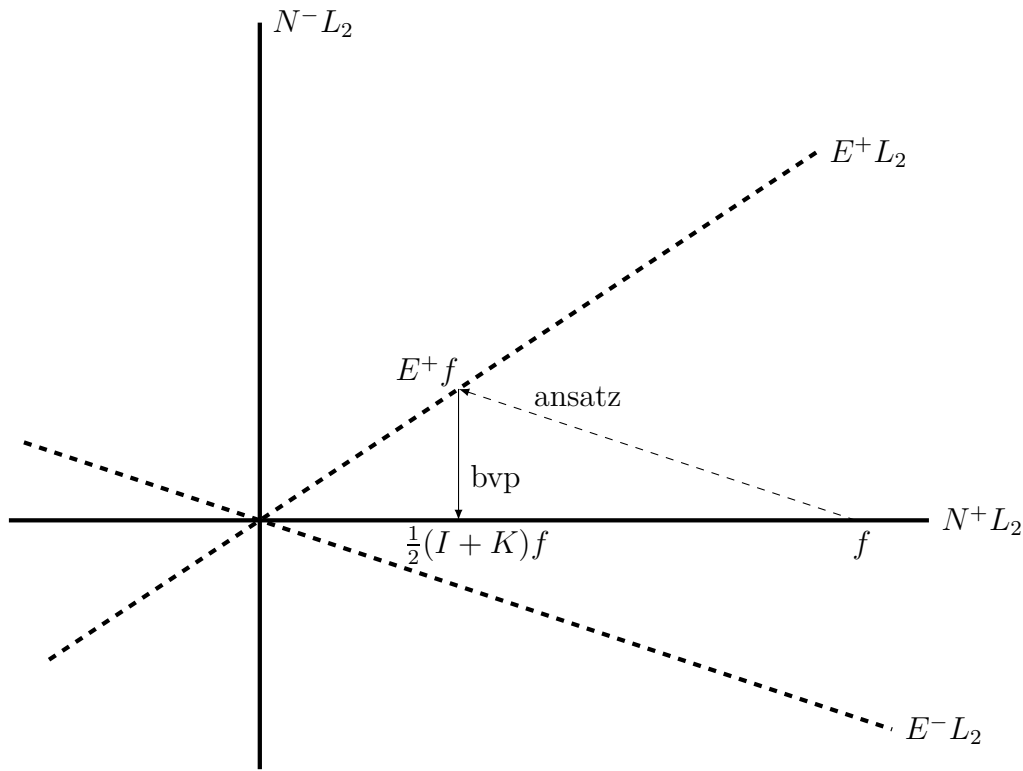 

The factorization into \eqref{eqfactor1} and \eqref{eqfactor2} explains the relation between the boundary value problem and $K$. 
However, we need to re-interpret the Dirichlet problem as a boundary value problem for analytic functions: We regard harmonic functions as real parts of analytic function, neglecting some possible minor topological obstructions.
Having switched in this way from the Laplace equation to the Cauchy--Riemann system, the Dirichlet problem now amounts 
to the Hilbert problem
of finding the analytic function in $\Omega$ which has a prescibed real part at the boundary.
Thus, in terms of operators, solving the boundary value problem means inverting the map  \eqref{eqfactor2}.

As for the right factor \eqref{eqfactor1}, this should be viewed as an ansatz for the solution (or more precisely the trace of the analytic function).
The {\em reason d'\^etre} for  \eqref{eqfactor1} is that it replaces the non-locally defined Hardy space $E^+L_2$ by the  locally defined space $N^+L_2$,
which of course is crucial for numerical computations.
Ideally, we would like \eqref{eqfactor1} to map  $N^+L_2$ bijectively onto $E^+L_2$, with a not to large condition number.
Recall that the condition number 
$$\kappa(T)=\|T\|\|T^{-1}\|$$
of a linear operator
 is a measure of how easy $T$ is to invert numerically.
Unfortunately, the ansatz \eqref{eqfactor1} may not be injective. Moreover, we show in Section~\ref{sec:2d} that even the Fredholm condition number 
 \eqref{eqfactor1} is comparable to $1/\theta$ as $\theta\to 0$, when $\partial\Omega$ contains a corner with angle $\theta$.
 This demontrates that numerically, the integral equation $(I+K)h=g$ may be much worse than the boundary value problem $N^+ h=g$, $h\in E^+L_2$,
 that it is used for solving.
 
 In this paper, we propose a new integral equation for solving the same boundary value problem, which arise upon 
 modifying the maps \eqref{eqfactor1} and \eqref{eqfactor2} to 
   \begin{equation}    \label{eqfactor1tilde}
    E^+:S^-\tilde L_2 \to E^+\tilde L_2
  \end{equation}
  and
    \begin{equation}    \label{eqfactor2tilde}
    N^+: E^+\tilde L_2 \to  N^+\tilde L_2
    \end{equation}
respectively.
Our motivation is a non-selfadjoint local elliptic boundary value problem for Dirac equations
 \begin{equation}   \label{eq:spindiracbvp}
   \begin{cases}
      \dirac f=0,&\qquad \text{in }\Omega,\\
      (1-\nu ) f=g,&\qquad \text{on }\partial\Omega.
   \end{cases}
 \end{equation}
 Details concerning the Dirac operator $\dirac$ is found in Section~\ref{sec:higheralg}.
 At the boundary $S^-:f\mapsto \tfrac 12(1-\nu)f$ acts from the left by Clifford multiplication and yields a projection. 
  We follow conventions from physics that Clifford squares of real vectors are positive. Stokes' theorem shows that
  the eigenvalue problem $\dirac f=ikf$, $\nu f=f$ only give spectrum in $\im k<0$. 
 This gives an indication of that the boundary value problem \eqref{eq:spindiracbvp} has good properties. Indeed \eqref{eq:spindiracbvp} is 
 as well posed as one can possibly hope for when studying time-harmonic waves with wave number $\im k\ge 0$. See Section~\ref{sec:spin} for more details.
 Replacing $N$ by the reflection operator $S:f\mapsto \nu f$ above, yields an ansatz \eqref{eqfactor1tilde} with better properties than
 \eqref{eqfactor1}. Due to its origin, we will refer to this new ansatz as the {\em spin ansatz}.
 However, to be able to use the spin ansatz, we need to embed our differential equation into a Dirac equation.
 This means that we extend the operators $N$ and $E$ to act in a larger space $\tilde L_2$ of multivector valued functions, 
 in which the map  \eqref{eqfactor2} encoding the  original boundary value problem is the restriction to an invariant subspace 
 of a larger map  \eqref{eqfactor2tilde} encoding a Dirac boundary value problem.
 In two dimensions, the Dirac equation amounts to a pair of functions in $\Omega$, one which is analytic and one which is anti-analytic.
 The resulting equation is as follows.
Given Dirichlet datum $g:\partial\Omega\to\R$, we solve the real linear singular integral equation
\begin{multline}   \label{eq:2dspineq}
  h(z)+ \re\left( \frac 1{\pi i} \pv\int_{\partial\Omega} \frac {h(w)}{w-z} dw\right) \\
  +\frac i\pi\pv\int_{\partial\Omega} \re\left( \frac {\rev{t(z)}}{\rev w-\rev z} h(w) \right) |dw|=2g(z), \qquad z\in\Sigma,
\end{multline}
for $h:\partial\Omega\to\C$, where $|dw|$ and $t(z)$ denote the scalar measure and unit tangent vector along $\partial\Omega$ respectively.
The solution $u+iv$ in $\Omega$, $u$ solving the Dirichlet problem and $v$ its harmonic conjugate function, is then
the Cauchy integral of $h$.
The derivation of this spin integral formulation for the Dirichlet problem is found in Example~\ref{ex:2dint}.

For the idea above to work, it is important that the Dirac boundary value 
problem \eqref{eqfactor2tilde} is well posed just like the original boundary value problem \eqref{eqfactor2} which we embed.
In the static case $k=0$ in two dimensions above, both the original boundary value problem and the Dirac boundary 
value problem, are in general well posed only in Fredholm sense.
For time-harmonic  Maxwell's equations however, the wave number is $k\ne 0$, and in this case we will have a well posed boundary value problem
with a unique solution  in a connected exterior domain.
We embed Maxwell's equations into a larger Dirac equation, and also the Dirac boundary value problem will be well posed.
Proceeding similar to above in three dimensions, and with $k\ne 0$, we show in  Example~\ref{ex:maxwellintequ} how the Maxwell
scattering problem in $\R^3\setminus \clos{\Omega}$ against a perfect conductor $\Omega$, can be solved by a singular integral equation
of the form
\begin{equation}   \label{eq:maxwellform}
  \tfrac 14 h(x)+ M(x)\pv \int_{\partial \Omega} \Psi_k(y-x) (1+\nu(y)) h(y) d\sigma(y)= g(x),\qquad x\in\partial\Omega,
\end{equation}
where $M$ is a multiplier involving $\nu$ and where $\Psi_k=\nabla\Phi$ modulo weakly singular terms.
The auxiliary function $h$ on $\partial\Omega$ in this case has four scalar components, but there are no algebraic, differential or integral
 constraints imposed on $h$.
It follows from Section~\ref{sec:spin} that this equation is uniquely solvable with a Fredholm condition number (that is using the Calkin algebra norm 
for operators) comparable to that for the
Maxwell boundary value problem.

In Section~\ref{sec:dielectric}  we generalise \eqref{eq:maxwellform} beyond the case of a perfect conductor, and formulate 
an integral equation with a spin ansatz for solving the Maxwell scattering problem against a finite number of objects with different scalar och constant
permittivity, permeability and conductivity.

%
%
As is often the case in research, both before and after Newton, one is sitting on the shoulders of giants. In my case, many of the ideas 
underlying this paper is a heritage from my PhD supervisor and colleague Alan McIntosh. 
As already mentioned, the $L_2$ boundedness of singular integrals of the kind employed in this paper on Lipschitz surfaces follows from the work of 
Coifman, McIntosh and Meyer~\cite{CMcM}, by Calder\'on's method of rotation from the one dimensional case.
A direct proof in $\R^n$ using Clifford algebra was given by McIntosh~\cite{Mc1}.
In my thesis \cite{Ax, Ax1, Ax2, AMc, Ax3}, I elaborated on the ideas of McIntosh to solve Maxwell's equations by embedding into
the elliptic Dirac equation, and this build on the earlier works of McIntosh and Mitrea~\cite{McMi} and Grognard, Hogan and McIntosh~\cite{AGHMc}.
Although the idea is older than so, the understanding of a boundary value problem in terms of subspaces like in Figure~\ref{fig:alan} is for me a heritage 
from McIntosh.  
In the study of smooth boundary value problems for Dirac operators, see for example the book \cite{BBW} by Boo\ss--Bavnbek and Wojciechowski, it is standard
to formulate boundary conditions in terms of subspaces. But less so in the study of non-smooth boundary value problems for Maxwell's equations
or second order elliptic equations.

It is surprising that it is still a somewhat open problem to find a numerically well behaved boundary integral equation for solving scattering problems for Maxwell's equation.
See Epstein and Greengard  \cite{EG} and Epstein, Greengard and O'Neil \cite{EGoN} for recent new Debye formulations, and 
Colton and Kress~\cite{CK0, CK} for the classical formulations.
The spin integral equations that we propose in the present paper are based on the McIntosh singular integral approach with Clifford algebra from \cite{McMi, AGHMc, Ax}.
However, the integral formulations there are not suitable for numerical computations, since they suffer from spurious resonances and 
the same problems as the classical double layer potential equation.
The work in the present paper began in \cite{Rspin1}, where an equation in the spirit of \eqref{eq:maxwellform} was formulated for solving the Maxwell scattering problem against a perfect conductor, using the formalism from \cite{Ax1}.
Both \eqref{eq:maxwellform} and the spin integral equation from \cite{Rspin1} have the advantages of not suffering from spurious resonances and having an improved condition number, at least in the Fredholm sense for Lipschitz boundaries, compared to classical formulations. However, the equation in  \cite{Rspin1} is for eight unknown scalar functions,
as compared to the four unknown scalar functions for the equation in this paper. 
In both cases, the main novelty lies is the use of an auxiliary spin boundary condition \ref{eq:spindiracbvp}, to obtain a singular integral operator with improved condition number.
To our knowledge, this local non-selfadjoint Dirac boundary condition has not been exploited in this way before, with numerical computations 
in mind.

\section{Higher dimension algebra}  \label{sec:higheralg}

In this section, we fix notation and survey the higher dimensional algebra which we need for
Dirac equations.
See \cite{Ax, Rspin1} for more details.

In particular we recall in Example~\ref{ex:maxwell} how Maxwell's equations fit into this framework. 
We denote by $\Omega^+$ a bounded domain in $\R^n$ with a strongly Lipschitz boundary $\Sigma=\partial\Omega^+$.
This means that locally around each point on the boundary, $\Sigma$ coincides with the graph of a Lipschitz regular function,
suitably rotated. The unbounded exterior domain we denote by $\Omega^-= \R^n\setminus\clos{\Omega^+}$.
Sometimes we abbreviate $\Omega=\Omega^+$. The unit normal vector field on $\Sigma$ pointing into $\Omega^-$ we denote by $\nu$.
The $\R^n$ standard basis is denoted $\{e_j\}_{j=1}^n$.

Functions to be used are defined on subsets of $\R^n$, which take values in the complex exterior algebra for $\R^n$,
which we denote by $\wedge \C^n$. This is the $2^n$-dimensional complex linear space spanned by basis multivectors
$$
  e_{s_1}\wedg e_{s_2}\wedg \ldots\wedg e_{s_j}, \qquad 1\le s_1<s_2<\ldots<s_j\le n,\, 0\le j\le n,
$$
The scalars $\C=\wedge^0\C^n\subset \wedge\C^n$ is the one-dimensional subspace corresponding to $j=0$,
and  the vectors $\C^n=\wedge^1\C^n\subset \wedge\C^n$ is the n-dimensional subspace corresponding to $j=1$.
General objects in the exterior algebra we refer to as multivectors.
For a given $0\le j\le n$, we denote the subspace of $j$-vectors by $\wedge^j\C^n$, and 
 $\wedge^\ev\C^n:= \oplus_j \wedge^{2j}\C^n$ and  $\wedge^\od\C^n:= \oplus_j \wedge^{2j+1}\C^n$
denotes the even subalgebra and odd subspace of the exterior algebra.

We use the hermitean inner product $(\cdot,\cdot)$ on $\wedge\C^n$ for which the above basis multivectors is an ON-basis.
The function space on $\Sigma$ where we consider our integral equations, is the space $L_2(\Sigma)= L_2(\Sigma;\wedge\C^n)$
of square integrable functions $f:\Sigma\to\wedge\C^n$ with inner product
$(f,g)= \int_\Sigma (f(x),g(x)) d\sigma(x)$, where $d\sigma$ denotes standard surface measure.

On $\wedge\C^n$ we use three complex bilinear products: The exterior product $u\wedg w$, the (left) interior product $u\lctr w$,
and the Clifford product $uw$. In the special case when the left factor is a vector $u\in\wedge^1\C^n$, these products are given by
the following basis formulas.
For $s_1<s_2<\ldots <s_j$, let $s=\{s_1,\ldots, s_j\}$ and $e_s=e_{s_1}\wedg e_{s_2}\wedg \ldots\wedg e_{s_j}$, 
and denote by $\epsilon(i,s)$ the number of indices $s_k\in s$ which are $< i$. Then
\begin{align*}
   e_i e_s= e_i\wedg e_s &= (-1)^{\epsilon(i,s)} e_{s\cup \{i\}}, \\
   e_i\lctr e_s &=0,
\end{align*}
when $i\notin s$, and if $i\in s$ then
\begin{align*}
   e_i\wedg e_s &=0, \\
   e_i e_s= e_i\lctr e_s&= (-1)^{\epsilon(i,s)} e_{s\setminus \{i\}}.
\end{align*}
The exterior and Clifford products are the associative complex algebra products with $1=e_\emptyset\in \wedge^0\C^n$ as identity, 
which are uniquely determined by these basis formulas.
The definition of the interior product, which is not associative, for two general multivectors is by duality: We require 
$$
  (u\lctr v, w)= (v, u\wedg w),\qquad u,v,w\in\wedge\C^n,
$$
whenever $u$ has real coordinates.
  Two useful unitary operations on $\wedge\C^n$, which are automorphisms with respect to all three products above, are
 the involution $\inv w$ and reversion $\rev w$, given by
 \begin{align}
   \inv w & = (-e_{s_1})\wedg (-e_{s_2})\wedg \ldots \wedg (-e_{s_j})= (-1)^{|s|}e_s,\\
    \rev w&= e_{s_j}\wedg e_{s_{j-1}}\wedg \ldots\wedg e_{s_1}=(-1)^{|s|(|s|-1)/2}e_s,   \label{eq:reversion}
\end{align}
for $w=e_s= e_{s_1}\wedg e_{s_2}\wedg \ldots\wedg e_{s_j}$. We also make use of coordinate-wise complex conjugation
which we denote by $\cc w$.
For $a\in\wedge^1 \C^n$ and $w\in\wedge\C^n$, the Clifford, interior and exterior products are related by
$$
   aw= a\lctr w+a\wedg w
 $$ 
 and conversely by the Riesz formulas
 \begin{align}
    a\wedg w=\tfrac 12( aw+\inv w a),   \label{eq:rieszwedg}\\
    a\lctr w=\tfrac 12( aw-\inv w a).  \label{eq:rieszlctr}
 \end{align}

Similar to the discussion in the introduction, we now aim to define two pairs of complementary $L_2$ projections, using the 
above algebra.
We define
\begin{align*}
   N^+ f(x) &= \nu(x)\lctr (\nu(x)\wedg f(x)),\qquad x\in\Sigma,\\
   N^- f(x) &= \nu(x)\wedg (\nu(x)\lctr f(x)),\qquad x\in\Sigma.
 \end{align*} 
Theese yield projection operators onto the subspaces of multivector fields $f$ which are tangential and normal pointwise all over $\Sigma$ respectively.
To explain this algebra, construct at a given point $x\in\Sigma$ an ON-basis with $\nu(x)$ as first basis vector.
Then write a given multivector $w$ in the induced basis for $\wedge\C^n$.
It is normal if all non-zero terms contain a  factor $\nu$. It is tangential if no non-zero terms contain a factor $\nu$.
When computing $\nu\wedg w$, a factor $\nu$ is added to all tangential terms, and all normal terms are nulled.
When computing $\nu\lctr w$, a factor $\nu$ is removed from all normal terms, and all tangential terms are nulled.
The reflection operator $N= N^+-N^-$, which reflects normal multivectors across tangential multivectors, can be written
\begin{equation}  \label{eq:Ndefn}
  Nf(x) = \nu(x)\inv{f(x)}\nu(x),\qquad x\in\Sigma.
\end{equation}
The expression \eqref{eq:Ndefn}  is readily seen to be correct by writing $f(x)$ in an ON-basis adapted to $\nu(x)$,
and (anti-)commute one of the factors $\nu$ through $\inv f$.

The boundary value problems we aim at, use $N$ for the description of the boundary conditions. We now turn to the differential equation
in the domains, which generalizes the Cauchy--Riemann system:
\begin{equation}   \label{eq:Dirackeq}
   \dirac f(x)= ikf(x),
\end{equation}
where $\dirac f= \sum_{j=1}^n e_j (\partial_j f(x))$ is a Dirac operator.
The wave number $k\in\C$  is always assumed to satisfy
$$
  \im k\ge 0,
 $$ 
 where our main interest $k\in\R\setminus \{0\}$ corresponds to undampened time-harmonic waves.
From the factorization $(\dirac+ik)(\dirac-ik)= \Delta+k^2$ of the Helmholtz operator we see that when $f$ solves $\dirac f=ikf$, then each coordinate function solves the
Helmholtz equation.
Similar to the Cauchy integral formula for analytic functions, we have a reproducing formula for solutions to $\dirac f=ikf$. 
For this we need fundamental solutions: For the Helmholtz operator $\Delta+k^2$ we use the fundamental solution
$$
  \Phi_k(x)= -\frac i4 \Big( \frac k{2\pi |x|} \Big)^{n/2-1} H^{(1)}_{n/2-1}(k|x|)
$$
where $H^{(1)}_\alpha(z)$ denote the $\alpha$ order Hankel function of the first kind. 
From this we derive a fundamental solution
$\Psi_k(x)= (\dirac-ik)\Phi_k(x)$ for $\dirac +ik$. In three dimensions, which is our main interest, we have
$
  \Phi_k(x)= -\frac{e^{ik|x|}}{4\pi|x|}
$
and
$$
  \Psi_k(x)= \Big( - \frac{x}{|x|^2}+ik\big( \tfrac x{|x|}-1\big)\Big)\Phi_k(x).
$$ 
In $\R^n$, the Cauchy singular integral operator on $\Sigma$ is
\begin{equation}   \label{eq:CauchyCliffop}
   E_k f(x)=2 \pv \int_{\Sigma} \Psi_k(y-x) n(y)f(y) dy, \qquad x\in\partial\Omega.
\end{equation}
Note that the kernel uses two Clifford products.
Analogous to the static classical two dimensional case, this operator $E_k$ is a reflection operator.
 The spectral projection $E_k^+= \tfrac 12(I+ E_k)$ projects onto the subspace consisting of traces of 
 solutions to $\dirac f=ikf$ in $\Omega^+$.  
 The spectral projection $E_k^-= \tfrac 12(I- E_k)$ projects onto the subspace consisting of traces of 
 solutions to $\dirac f=ikf$ in $\Omega^-$ which satisfy a Dirac radiation condition.
 See \cite{Rspin1}.

After these definitions, we formulate boundary value problems in this Dirac framework. 
We can restrict the projections $N^+$ and $N^-$ to one of the two subspaces $E_k^+L_2$ and $E_k^-L_2$. This 
gives the four maps
\begin{align}
   N^+&: E_k^+L_2 \to N^+L_2,   \label{map:inttan} \\
      N^-&: E_k^+L_2 \to N^-L_2,   \label{map:intnor} \\
         N^+&: E_k^-L_2 \to N^+L_2, \label{map:exttan} \\
            N^-&: E_k^-L_2 \to N^-L_2.   \label{map:extnor}
\end{align}
These represents four different boundary value problems. In \eqref{map:inttan} we look for a solution in $\Omega^+$
with a prescribed tangential part on $\Sigma$.
In \eqref{map:intnor} we look for a solution in $\Omega^+$
with a prescribed normal part on $\Sigma$.
In \eqref{map:exttan} we look for a solution in $\Omega^-$
with a prescribed tangential part on $\Sigma$.
In \eqref{map:extnor} we look for a solution in $\Omega^-$
with a prescribed normal part on $\Sigma$.

Conversely, we can restrict the projections $E_k^+$ and $E_k^-$ to one of the two subspaces $N^+L_2$ and $N^-L_2$. This 
gives the four maps
\begin{align}
   E_k^+&: N^+L_2 \to E_k^+L_2,   \label{map:tanint}\\   
      E_k^+&: N^-L_2 \to E_k^+L_2,   \label{map:norint}\\   
         E_k^-&: N^+L_2 \to E_k^-L_2,  \label{map:tanext}\\  
            E_k^-&: N^-L_2 \to E_k^-L_2. \label{map:norext}  
\end{align}

These do not represent boundary value problems, but rather are ansatzes for (traces of) solutions to the Dirac equation in the domains. 
They can be combined with the above boundary 
value maps respectively to yield integral equations similar to the classical double layer potential equation.

\begin{ex}    \label{ex:planeDirac}
We describe in two dimensions $n=2$, how the Dirac framework in this section is related to the complex analysis used in the Introduction.
For complex analysis we do not regard the imaginary unit algebraically as $i=\sqrt {-1}$ but rather geometrically as the unit bivector
$j= e_1\wedg e_2$.  Note that with Clifford algebra $j^2= e_1e_2e_1e_2= -1$ and that reversion \eqref{eq:reversion} 
gives complex conjugation with respect to $j$.
In this way, a real multivector can be viewed as a pair of complex numbers $(z,w)$, where $z=z_1+jz_2\in \wedge^0\R^2\oplus \wedge^2\R^2$
and $e_1w= e_1(w_1+jw_2)= w_1e_1+ w_2e_2\in \wedge^1\R^2$.
More generally, a complex multivector corresponds to a pair of bicomplex numbers $(z,w)$ where the components $z_1,z_2,w_1, w_2$
are complex numbers with the imaginary unit $i$. This is only needed when considering differential equations with non-zero 
wave number $k\ne 0$, like for Maxwell below.
For real multivectors and $k=0$ as in the introduction, we note that the Dirac operator acts as
$$
  \dirac (f+e_1g)= \partial g+ e_1 (\rev\partial f),
$$
on a pair of complex valued functions $f$ and $g$, representing a multivector field $f+e_1g: \Omega\to \wedge\R^2$.
Here $\partial= \partial_1-j\partial_2$ and $\rev\partial= \partial_1+j\partial_2$.
Therefore a solution to $\dirac (f+e_1g)=0$ is a complex analytic function $f$ and an anti-analytic function $g$,
and the Cauchy singular integral operator $E=E_0$ from \eqref{eq:CauchyCliffop} acts as the classical Cauchy integral \eqref{eq:classyCauchy} on $f$,
with $i$ replaced by $j$, and as its anti-analytic analogue on $g$.
For the boundary conditions, we note that
$$
  N(f+e_1g)= \rev f+ e_1(-\tilde \nu^2\rev g),
$$
  where $\tilde \nu= e_1\nu$ is the unit normal vector $\nu$ represented as a complex number.
  Thus we see that $N^+(f+e_1g)$ gives the real part of $f$ and the tangential part of $g$, whereas 
   $N^-(f+e_1g)$ gives the imaginary part of $f$ and the normal part of $g$, on the boundary $\partial\Omega$.
\end{ex}

\begin{ex}   \label{ex:maxwell}
We are now in three dimensions $n=3$.
Consider Maxwell's equations in $\Omega^-$ consisting of a uniform, isotropic and conducting material, so that electric permittivity $\epsilon$,
magnetic permeability $\mu$ and conductivity $\sigma$ are constant and scalar.
We study electromagnetic wave propagation in $\Omega^-$, with material
constants $\epsilon, \mu,\sigma$, and $\Omega^+$ is assumed to be a perfect conductor.
Maxwell's equations in $\Omega^-$ then take the form 
\begin{equation}  \label{eq:maxwell}
  \begin{cases}
     \nabla \cdot  H=0, \\
     \nabla \times E= ik H, \\
     \nabla\times H=-ikE, \\
     \nabla\cdot E=0,
  \end{cases}
\end{equation}
for the (rescaled) electric and magnetic fields $E$ and $H$, where the wave number $k$ satisfies
$k^2= (\epsilon+ i\sigma/\omega)\mu \omega^2$.
The data in the scattering problem we seek to solve are incoming electric and magnetic fields $E^0$ and $H^0$
solving \eqref{eq:maxwell} in $\Omega^-$.
Our problem is to solve for the scattered electric and magnetic fields $E$ and $H$ solving \eqref{eq:maxwell} in $\Omega^-$, an inhomogeneous boundary condition at $\Sigma$ and the Silver--M\"uller radiation condition at infinity.
At $\Sigma$, since the fields vanish
in the perfect conductor $\Omega^+$, we have boundary conditions
\begin{equation}     \label{eq:Maxbcond}
  \begin{cases}
     \nu \cdot  (H+H^0)=0, \\
     \nu \times (E+E^0)= 0, \\
     \nu\times (H+H^0)=J_s, \\
     \nu\cdot (E+E^0)=\rho_s,
  \end{cases}
\end{equation}
with incoming fields $E^0$ and $H^0$ from $\Omega^-$ and surface charges and currents $\rho_s$ and $J_s$.

In $\R^3$, a multivector $w$ can be viewed a collection of two scalars $\alpha$ and $\beta$, and two vectors $a$ and $b$,
where $w= \alpha+a+*b+*\beta\in\wedge\C^3$. Here $*$ denotes the Hodge star defined by $*1=e_1\wedg e_2\wedg e_3$, $*e_1=e_2\wedg e_2$, $*e_2=-e_1\wedg e_3$ and $*e_3=e_1\wedg e_2$, which identifies $\wedge^0\C^3$ and
$\wedge^3\C^3$, and $\wedge^1\C^3$ and $\wedge^2\C^3$ respectively.
Following the setup in \cite{Rspin1}, we write the full electromagnetic field as the multivector field 
$F= E+*H$, and note that Maxwell's equations implies the Dirac equation \ref{eq:Dirackeq} for this $F$.
However, $F$ is not a general solution to \ref{eq:Dirackeq}, but satisfies the constraint that the $\wedge^0\C^3$ and $\wedge^3\C^3$ parts of $F$ vanishes.

The Maxwell boundary conditions \ref{eq:Maxbcond} means that $\nu\times E$ and $\nu \cdot H$ are prescribed on $\Sigma$.
In terms of $F$, this means that $N^+ F$ is prescribed. In \cite[Sec. 5]{Rspin1}, it  was shown that the constraint $\nabla_T\times E^0_T=ikH_N^0$,
which the incoming fields will satisfy, will imply that the Dirac solution $F$ to the boundary value problem
$$
\begin{cases}
  \dirac F= ikF & \text{in } \Omega^-, \\
  N^+F=g &\text{on } \Sigma,
\end{cases}
$$
with $g= -N^+(E^0+*H^0)|_\Sigma$, indeed is a Maxwell field in the sense that the $F=E+*H$ for two vector fields $E$ and $H$ solving
the Maxwell boundary value problem.

\end{ex}

\begin{ex}   \label{ex:layerembed}
Consider $k=0$ and the reflection operators $E=E_0$ and $N$ which we use to encode Dirac boundary conditions.
In the Introduction, we saw in two dimensions $n=2$, that the double layer potential $K$ equals the compression of $E$ to 
the subspace $N^+L_2$. More precisely, following Example~\ref{ex:planeDirac}, we mean that $E$ and $N$ are restricted to the subspace 
of complex valued, that is $\wedge^0\R^2\oplus \wedge^2\R^2$-valued, functions, where $E$ acts by the classical Cauchy singular integral and $N$ acts
by complex conjugation.
We now explain how $K$ also fits into the Dirac framework in this section, in dimensions $n\ge 3$.
With the projections $N^+$ and $N^-$, it is not possible to compress $E$ to $\wedge^0\C^n$ when $n\ge 3$.
Nevertheless, both $K$ and its adjoint 
$$
  K^*f(x)= 2 \pv \int_{\partial\Omega} \nu(x)\cdot(\nabla\Phi)(x-y) f(y) d\sigma(y), \qquad x\in\Omega,
$$
appear in different invariant subspaces for the operators $N^\pm E N^\pm$. For $K^*$, we note that when $f$ is scalar, so that $\nu f$ is a normal vector field,
then 
$$
  N^- E (\nu f)= \nu(-K^* f).
$$
So the subspace of normal vector fields is invariant under $N^-EN^-$, and its action there is given by $-K^*$ upon identifying scalars 
and normal vector fields.

For $K$, we compute for a scalar function $f\in H^1(\Sigma)$, that
$$
   N^+E (\nabla_T f)= \nabla_T(Kf),
$$
where $\nabla_T$ denotes tangential gradient.
So the subspace of tangential gradient vector fields is invariant under $N^+EN^+$, and its action there is given by $K$ on the potential.
\end{ex}

\section{Known well posedness results}   \label{sec:NEwp}

In this section we survey the known invertibility results from \cite{McMi, AGHMc, Ax} for the maps \eqref{map:inttan} - \eqref{map:norext}
on the space $L_2(\Sigma;\wedge\C^n)$ on bounded strongly Lipschitz surfaces in $\R^n$.
We include the proofs since they serve as background later in Section~\ref{sec:spin}.

\begin{prop}   \label{prop:rellichest}
  The maps  \eqref{map:inttan}- \eqref{map:extnor} all have closed range and a finite dimensional null space.
\end{prop}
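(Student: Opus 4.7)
The plan is to prove all four statements uniformly by establishing Rellich-type identities for Dirac solutions in $\Omega^\pm$. Since $\Sigma$ is strongly Lipschitz, one can choose a Lipschitz vector field $\theta$ on $\R^n$ with $\theta\cdot\nu\ge c>0$ a.e.\ on $\Sigma$. For $f\in E_k^+L_2$, let $F$ be the unique Dirac solution in $\Omega^+$ with trace $f$. Apply the divergence theorem to $\theta|F|^2$ in $\Omega^+$, and use $\dirac F=ikF$ together with the Riesz decompositions \eqref{eq:rieszwedg}--\eqref{eq:rieszlctr} to convert $\theta\cdot\nabla F$ into terms where, under the Hermitian pairing with $F$, the boundary integrand splits by the pointwise orthogonality of tangential and normal parts. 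The resulting identity has the shape
\[
\int_\Sigma (\theta\cdot\nu)\bigl(|N^+f|^2-|N^-f|^2\bigr)\,d\sigma \;=\; \Phi_k(F),
\]
where $\Phi_k(F)$ is a bulk quadratic form involving $\divv\theta$ and zeroth-order $k$-dependent terms acting on $F$, and is therefore bounded by $\|F\|_{L_2(\Omega^+)}^2$.

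The next step is to observe that $f\mapsto F|_{\Omega^+}$ is a \emph{compact} map from $L_2(\Sigma)$ into $L_2(\Omega^+)$: Dirac solutions satisfy Caccioppoli-type interior estimates, and restricted to any slightly smaller domain they gain a derivative, while trace theory for $\dirac F\in L_2$ bounds $\|F\|_{L_2(\Omega^+)}$ by a weak negative-index norm of $f$. Hence $\Phi_k$ descends to a compact sesquilinear form on $E_k^+L_2$. Combining the Rellich identity with the Pythagorean identity $\|f\|^2=\|N^+f\|^2+\|N^-f\|^2$ and the lower bound $\theta\cdot\nu\ge c>0$ produces the two Peetre-type estimates
\[
\|f\|_{L_2}^2 \le C\bigl(\|N^+f\|_{L_2}^2 + |\Phi_k(F)|\bigr), \qquad \|f\|_{L_2}^2 \le C\bigl(\|N^-f\|_{L_2}^2 + |\Phi_k(F)|\bigr),
\]
and such an estimate with a compact remainder is well known to imply finite-dimensional null space and closed range. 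This proves the claim for the maps \eqref{map:inttan} and \eqref{map:intnor}.

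For the exterior maps \eqref{map:exttan} and \eqref{map:extnor}, I would repeat the argument on $\Omega^-\cap B_R$ for a large ball $B_R$, using the Dirac radiation condition built into $E_k^-L_2$ (recalled in Section~2) to show that the boundary contribution from $\partial B_R$ tends to zero or, more precisely, contributes only a lower-order term as $R\to\infty$. The compactness step is the same, using interior estimates together with decay at infinity. The main obstacle is the Clifford-algebra bookkeeping that turns $\theta\cdot\nabla F$ paired with $F$ into a genuine difference $|N^+f|^2-|N^-f|^2$ at the boundary; this is where the specific Dirac structure (as opposed to e.g.\ scalar Helmholtz) is essential and is precisely the computation carried out in \cite{McMi,AGHMc,Ax} that this section is surveying.
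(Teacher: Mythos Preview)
Your overall strategy---a Rellich-type identity yielding a lower bound modulo a compact remainder---is the right one and matches the paper. The gap is in the specific identity you claim. Applying the divergence theorem to $\theta|F|^2$ produces the boundary term $\int_\Sigma(\theta\cdot\nu)|f|^2\,d\sigma$, and $|f|^2=|N^+f|^2+|N^-f|^2$, not the difference. To turn the bulk term $\re(F,(\theta\cdot\nabla)F)$ into a second boundary contribution with the opposite sign on $|N^-f|^2$ you would need a further integration by parts that genuinely uses the Dirac structure; you do not carry this out, and in the last paragraph you effectively concede the point by deferring the ``Clifford-algebra bookkeeping'' to the references. As written, the displayed identity is not justified.

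The paper proceeds differently and avoids any claim about the difference $|N^+f|^2-|N^-f|^2$. One starts on the boundary with the Clifford identity $2(\theta,\nu)=\theta\nu+\nu\theta$, which gives
\[
\int_\Sigma |f|^2(\theta,\nu)\,d\sigma=\re\int_\Sigma(f\nu,f\theta)\,d\sigma.
\]
The reversed Riesz formula $2f\wedg\nu=f\nu+\nu\inv f$ then splits $f\nu$ into $2f\wedg\nu$, whose norm equals $|N^+f|$, and $-\nu\inv f$. The point is that $\re\int_\Sigma(\nu\inv f,f\theta)\,d\sigma$ is exactly of the form to which Stokes' theorem applies, using $\dirac f=ikf$, and becomes a volume integral supported in $\supp\theta$. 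This gives $\|f\|^2\lesssim\|N^+f\|\,\|f\|+\|f\|_{L_2(U)}^2$ directly; the $N^-$ estimate and the interior cases follow from the same computation with the obvious modifications. No identity with the sign-split form you wrote is needed.

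One further simplification worth noting: the paper chooses $\theta$ smooth and \emph{compactly supported}. The exterior case is then handled exactly like the interior one, with $U=\Omega^-\cap\supp\theta$ bounded, and no large-ball cutoff or radiation condition is required at this stage of the argument. Your compactness claim for the map $f\mapsto F|_U$ is correct and is precisely what the paper invokes for the Cauchy integral $L_2(\Sigma)\to L_2(U)$.
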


\begin{proof}
   We demonstrate this for the map  \eqref{map:exttan}; the proofs for the other three are similar.
   For the proof we need the Riesz formula
   $$  
     2 f\wedg \nu= f\nu+\nu\inv f,
   $$
   valid for any $f\in\wedge\C^n$ and vector $\nu\in\wedge^1\C^n$.
  This is the reversed version of \eqref{eq:rieszwedg}.
   The strong Lipschitz and compactness assumptions on $\Sigma$ shows the existence of a smooth and compactly supported 
   vector field $\theta$ such that $\inf_\Sigma(\theta,\nu)>0$. Using this we calculate for $f\in E^-_k L_2$ that
   \begin{multline*}
      \int_\Sigma |f|^2 d\sigma\approx  \int_\Sigma |f|^2 (\theta,\nu) d\sigma= \frac 12\int_\Sigma (f(\theta\nu+\nu\theta),f)d\sigma \\
      =\re\int_\Sigma(f\nu,f\theta)d\sigma= \re\int_\Sigma (2f\wedg \nu-\nu\inv{f},f\theta) d\sigma \\
      = \re\int_\Sigma (2f\wedg \nu,f\theta) d\sigma +\int_{\Omega^-}\Big( (\inv{ikf},f\theta)+ (\inv f,(ikf)\theta)+(\inv f,\sum_j e_j f\partial_j\theta)\Big)dx.
   \end{multline*}
   The identity $\theta\nu+\nu\theta=2(\theta,\nu)$ is a special case of \eqref{eq:rieszlctr}. 
      The last identity above is an application of Stokes' theorem.
      This leads to the norm estimate
      $$
        \|f\|^2\lesssim \|f\wedg \nu\| \|f\|+ \|f\|^2_{L_2(U)},
      $$
      where $U=\Omega^-\cap\supp\theta$.
      This proves the claim since $\|f\wedg \nu\|=\|N^+f\|$ and since the Cauchy integral acts as a compact operator 
      $L_2(\Sigma)\to L_2(U)$.
\end{proof}

\begin{cor}
  The maps  \eqref{map:tanint}- \eqref{map:norext} all have closed range and a finite dimensional null space.
\end{cor}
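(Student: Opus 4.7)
The plan is to derive the corollary from Proposition~\ref{prop:rellichest} by means of a single auxiliary operator that simultaneously encodes both groups of four maps as direct sums. Define
\begin{equation*}
T := N^+ + E_k^+ - I : L_2(\Sigma) \to L_2(\Sigma).
\end{equation*}
Writing $f \in L_2(\Sigma)$ in the form $f = f_+ + f_-$ with $f_\pm \in N^\pm L_2$, a direct computation gives
\begin{equation*}
   Tf = E_k^+ f_+ - E_k^- f_-,
\end{equation*}
so that $T$ is the direct-sum operator $E_k^+|_{N^+L_2} \oplus (-E_k^-|_{N^-L_2})$ from $N^+L_2 \oplus N^-L_2 = L_2$ into $E_k^+L_2 \oplus E_k^-L_2 = L_2$. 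Writing instead $f = g_+ + g_-$ with $g_\pm \in E_k^\pm L_2$, the very same $T$ satisfies
\begin{equation*}
   Tf = N^+ g_+ - N^- g_-,
\end{equation*}
and is therefore also equal to the direct-sum operator $N^+|_{E_k^+L_2} \oplus (-N^-|_{E_k^-L_2})$. Both decompositions of $L_2$ are topological direct sums, since $N^\pm$ and $E_k^\pm$ are bounded projections, so the elementary fact that a direct sum of operators has closed range and finite-dimensional kernel precisely when each summand does lets me transport information freely between the two block representations of $T$.

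Proposition~\ref{prop:rellichest} provides closed range and finite-dimensional kernel for the blocks $N^+|_{E_k^+L_2}$ and $N^-|_{E_k^-L_2}$ appearing in the second representation; hence $T$ itself has closed range and finite-dimensional kernel. Reading this through the first representation gives the conclusion for the maps \eqref{map:tanint} and \eqref{map:norext}. To dispatch the remaining two maps, I run the identical argument for the operator
\begin{equation*}
   T' := N^- + E_k^+ - I,
\end{equation*}
which decomposes as $(-E_k^-|_{N^+L_2}) \oplus (E_k^+|_{N^-L_2})$ on the $N$-side and as $(N^-|_{E_k^+L_2}) \oplus (-N^+|_{E_k^-L_2})$ on the $E_k$-side. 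The latter two blocks are again covered by Proposition~\ref{prop:rellichest}, namely the maps \eqref{map:intnor} and \eqref{map:exttan}, so the former blocks, which are exactly the maps \eqref{map:tanext} and \eqref{map:norint} (up to sign), inherit closed range and finite-dimensional kernel.

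There is no analytic obstacle, since the Rellich/Stokes coercivity estimate is already in place from the proposition; the only step requiring care is the bookkeeping that matches the eight blocks into four pairs across the two representations of $T$ and $T'$, and checks that each sign change is inconsequential for closedness of range and finiteness of the kernel.
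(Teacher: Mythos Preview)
Your argument is correct, and it takes a genuinely different route from the paper's. The paper does not introduce the auxiliary operator $T$; instead it goes back to the explicit Rellich-type estimate
\[
\|f\|\lesssim \|N^+f\|+\|f\|_{L_2(U)},\qquad f\in E_k^-L_2,
\]
obtained in the proof of Proposition~\ref{prop:rellichest}, and substitutes $f=E_k^-g$ for $g\in N^-L_2$ to deduce directly the lower bound $\|g\|\lesssim\|E_k^+g\|$ modulo a compact term; the other three pairs are handled the same way. So the paper uses the \emph{proof} of Proposition~\ref{prop:rellichest}, not just its statement.

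Your approach is more algebraic: you observe that $T=N^++E_k^+-I=\tfrac12(N+E_k)$ is simultaneously block-diagonal with respect to the $N$-splitting and the $E_k$-splitting, and transfer the semi-Fredholm property between the two block pictures. This needs only the \emph{conclusion} of Proposition~\ref{prop:rellichest}, and it anticipates the operator-algebra identities for two reflections that the paper develops later in Proposition~\ref{prop:opinvequiv} (note $N T=\tfrac12(I+NE_k)=N^+E_k^++N^-E_k^-$). The paper's argument, on the other hand, makes explicit that each pair of maps, e.g.\ \eqref{map:exttan} and \eqref{map:norint}, shares the common null space $N^-L_2\cap E_k^-L_2$, a point your decomposition of $\ker T$ also contains but leaves implicit.
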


\begin{proof}
    We show how the lower estimate
    $$
       \|f\|\lesssim \|N^+f\|+ \|f\|_{L_2(U)},\qquad f\in E_k^-L_2,
    $$
    obtained above for the map \eqref{map:exttan}, implies a similar lower bound for the map \eqref{map:norint}.
    Note that \eqref{map:exttan} and \eqref{map:norint} have the same null space $N^-L_2\cap E_k^-L_2$.
    Similar arguments for the other three pairs of maps are possible.
    
    Assume $g\in N^-L_2$ and apply the above lower bound to $f:= g-E_k^+g=E_k^-g$ to obtain
    $$
      \|g-E_k^+g\|\lesssim \|N^+(g-E_k^+g)\| + \|g-E_k^+g\|_{L_2(U)}.
    $$
    Since $N^+g=0$, this implies the claimed lower bound
    $\|g\|\lesssim \|E_k^+g\|$ modulo a compact term.
\end{proof}

\begin{thm}   \label{thm:ENFredwell}
  The maps  \eqref{map:inttan}- \eqref{map:norext} all are Fredholm maps with index zero for any $\im k\ge 0$.
\end{thm}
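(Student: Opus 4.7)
By Proposition~\ref{prop:rellichest} and its Corollary, each of the eight maps has closed range and finite-dimensional null space, so each is left semi-Fredholm. Two things remain: bounding the cokernel (to obtain Fredholmness) and computing the Fredholm index.

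\textbf{Finite cokernel.} Since $N^\pm$ are the $L_2$-orthogonal projections onto $N^\pm L_2$, the cokernel of $N^+\colon E_k^+L_2\to N^+L_2$ is isomorphic to $N^+L_2\cap\ker(E_k^+)^*$. The Hilbert adjoint $(E_k)^*$ is itself a Cauchy-type reflection operator: transposing the kernel $\Psi_k(y-x)\nu(y)$, using the identity $\Psi_k(-x)=-\widehat{\Psi_k(x)}$ and reversing the order of the two Clifford factors, $(E_k)^*$ becomes a Clifford conjugate, via involution and reversion, of a Cauchy singular integral of the same structural form. The Rellich estimate from the proof of Proposition~\ref{prop:rellichest} therefore applies to it and bounds the dimension of $N^+L_2\cap\ker(E_k^+)^*$. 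The remaining seven maps are treated symmetrically, establishing Fredholmness in all eight cases.

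\textbf{Index zero.} Because $\Psi_k$ depends continuously on $k\in\{\im k\ge 0\}$, the family $k\mapsto E_k^\pm$ is norm continuous on $L_2(\Sigma)$ and the subspaces $E_k^\pm L_2$ vary continuously in the gap metric. Each of the eight maps thus constitutes a norm-continuous family of Fredholm operators parametrised by $k$, along which the Fredholm index is locally constant. It suffices therefore to compute the index at $k=0$. A further continuous deformation of $\Sigma$ through strongly Lipschitz surfaces to the unit sphere preserves Fredholmness uniformly in the Lipschitz character of $\Sigma$, so the index equals its value on the unit sphere at $k=0$. There an explicit computation, using the Kelvin inversion $x\mapsto x/|x|^2$ to interchange $E_0^+L_2$ and $E_0^-L_2$ and the Hodge star (multiplication by the unit $n$-vector) to interchange $N^+L_2$ and $N^-L_2$, yields involutive bijections pairing the kernel of each of the eight maps with the cokernel of its natural partner of the same structure, forcing every index to vanish.

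\textbf{Main obstacle.} The principal difficulty is the algebraic identification of $(E_k)^*$ as a Cauchy-type reflection in the Clifford setting, where reversion, involution, complex conjugation and the left/right positioning of the Clifford factors $\Psi_k$ and $\nu$ must all combine correctly so that the Rellich argument of Proposition~\ref{prop:rellichest} applies verbatim to the dual. A subsidiary technicality is the continuity of $k\mapsto E_k^\pm$ and of the Lipschitz deformation family of $\Sigma$, both of which reduce to uniform $L_2$-boundedness of Cauchy-type singular integrals, ultimately supplied by the Coifman--McIntosh--Meyer theorem.
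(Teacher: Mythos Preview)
Your strategy diverges from the paper's in both steps, and the index-zero step contains a genuine gap.

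\textbf{Comparison with the paper's argument.} The paper does not analyse $(E_k)^*$ for general $k$, nor deform $\Sigma$. Instead it reduces by continuity to $k=0$ and then uses the single duality identity $(Ef,\nu g)=-(f,\nu(Eg))$ (together with the analogous one for $N$) to show that, up to conjugation by $f\mapsto\nu f$, the adjoint of \eqref{map:exttan} is \eqref{map:norint}. The decisive observation, already made in the Corollary, is that \eqref{map:exttan} and \eqref{map:norint} share the \emph{same} null space $N^-L_2\cap E^-L_2$. Hence $\dim\ker\eqref{map:exttan}=\dim\ker\eqref{map:norint}=\dim\ker\eqref{map:exttan}^*=\dim\text{coker}\,\eqref{map:exttan}$, so the index vanishes and Fredholmness comes for free. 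No explicit model surface is needed. Your finite-cokernel step, identifying $(E_k)^*$ as another Cauchy-type reflection, is morally the same duality but worked out the hard way for each $k$; the paper's version at $k=0$ is cleaner because the adjoint is literally one of the eight maps already controlled by Proposition~\ref{prop:rellichest}.

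\textbf{Gap in the index argument.} Your deformation of $\Sigma$ to the unit sphere is not available in general: $\Omega^+$ is only assumed to be a bounded strongly Lipschitz domain, so $\Sigma$ may be a torus or have several components, and there is no continuous path of Lipschitz surfaces to $S^{n-1}$. Even on the sphere your symmetry argument is incomplete: Kelvin inversion swaps $E_0^\pm L_2$ and the Hodge star swaps $N^\pm L_2$, so you obtain equalities of indices between \emph{different} members of the eight maps (for instance $\text{ind}\,\eqref{map:inttan}=\text{ind}\,\eqref{map:extnor}$), but not the relation $\text{ind}=0$ for any one of them. What is missing is precisely the ingredient the paper supplies: an identification of the \emph{adjoint} of one map with another map having the \emph{same} kernel. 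Without that, the Kelvin and Hodge symmetries by themselves do not force the index to vanish.
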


\begin{proof}
  By the method of continuity it suffices to consider the case $k=0$, since $k\mapsto E_k$ is a continuous map. It 
  is in fact an operator-valued analytic map. Writing $E=E_0$, we note the dualities
$$
    (Nf, \nu g) = - (f,\nu (Ng))
$$ 
and 
$$
  (Ef, \nu g) = -(f,\nu(Eg)),
$$
for any $f,g\in L_2(\Sigma;\wedge\C^n)$.
Now consider two of the restricted projections which have the same null space, for example \eqref{map:exttan} and \eqref{map:norint}.
Computing for $f\in E^-L_2$ and $g\in N^-L_2$ that
$$
  (N^+ f, \nu g)= (f,\nu g)= (f,\nu (E^+g)),
$$
it follows that the map dual to \eqref{map:exttan} is similar to \eqref{map:norint}.
Since \eqref{map:exttan} and \eqref{map:norint} have the same finite dimensional null space, their index must be zero.
\end{proof}

\begin{prop}   \label{prop:Ninj}
  The maps  \eqref{map:inttan}- \eqref{map:norext} all are isomorphisms when $\im k> 0$.
  The maps \eqref{map:exttan}, \eqref{map:extnor}, \eqref{map:tanint} and \eqref{map:norint} also are isomorphisms when
  $k\in \R\setminus\{0\}$, provided that $\Omega^-$ is a connected domain.
\end{prop}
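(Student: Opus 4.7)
The plan is to combine Theorem~\ref{thm:ENFredwell}, which says each of the eight maps is Fredholm of index zero, with injectivity arguments, so that bijectivity reduces to triviality of the null spaces. A direct inspection shows that the eight null spaces collapse to four intersections of the form $N^\sigma L_2\cap E_k^{\sigma'}L_2$ with $\sigma,\sigma'\in\{+,-\}$; for instance the null spaces of $\eqref{map:inttan}$ and $\eqref{map:norext}$ both equal $N^-L_2\cap E_k^+L_2$, those of $\eqref{map:intnor}$ and $\eqref{map:tanext}$ both equal $N^+L_2\cap E_k^+L_2$, and analogously for the exterior pairs. Hence it suffices to prove injectivity for the four boundary value maps $\eqref{map:inttan}$--$\eqref{map:extnor}$.

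The key tool for the case $\im k>0$ is an energy identity for the Dirac equation. Using that Clifford multiplication by a real vector is Hermitian self-adjoint on $\wedge\C^n$, applying the divergence theorem on $\Omega^+$ to the real vector field with $j$th component $(e_j f,f)$ yields, for any $f\in E_k^+L_2$,
\begin{equation}\label{eq:enid}
-2(\im k)\int_{\Omega^+}|f|^2\,dx=\int_\Sigma (\nu f,f)\,d\sigma.
\end{equation}
Decomposing $f=N^+f+N^-f$ pointwise and using that Clifford multiplication by $\nu$ swaps the tangential and normal subspaces of $\wedge\C^n$, one checks that $(\nu f,f)=(\nu N^+f,N^-f)+(\nu N^-f,N^+f)$, which vanishes whenever either $N^+f=0$ or $N^-f=0$ on $\Sigma$. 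For $\im k>0$ this forces $f\equiv 0$, giving injectivity of $\eqref{map:inttan}$ and $\eqref{map:intnor}$. The exterior analogues $\eqref{map:exttan}$ and $\eqref{map:extnor}$ are handled by running the same calculation on $\Omega^-\cap B_R$: the Dirac radiation condition built into $E_k^-L_2$ pointwise controls $f$ by the fundamental solution $\Psi_k$, which for $\im k>0$ decays exponentially, killing the boundary integral on $\partial B_R$ in the limit $R\to\infty$.

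For $k\in\R\setminus\{0\}$, identity $\eqref{eq:enid}$ only gives the vanishing of the boundary integral and is not enough, so one must invoke Rellich's uniqueness theorem. Each scalar component $f_s$ of $f\in E_k^-L_2$ solves $(\Delta+k^2)f_s=0$ in $\Omega^-$ and satisfies the Sommerfeld radiation condition. Applying Green's identity to $f_s$ on $\Omega^-\cap B_R$ and using the boundary conditions $N^\pm f=0$ combined with $\dirac f=ikf$ at $\Sigma$ (which splits into a tangential Dirac operator and a normal derivative, the latter expressible in terms of the former plus $ikf$), the boundary term on $\Sigma$ is controlled, and one extracts $\int_{|x|=R}|f_s|^2\,d\sigma\to 0$ as $R\to\infty$. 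Rellich's lemma then forces $f_s\equiv 0$ outside a large ball, and unique continuation for $\Delta+k^2$ together with connectedness of $\Omega^-$ propagates this to all of $\Omega^-$. This gives injectivity of $\eqref{map:exttan}$ and $\eqref{map:extnor}$, and hence of $\eqref{map:tanint}$ and $\eqref{map:norint}$ via the null space matching.

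The main obstacle will be the Clifford-algebraic bookkeeping in the real-$k$ case: translating the multivector conditions $N^\pm f=0$ and $\dirac f=ikf$ cleanly into vanishing Cauchy data for each Helmholtz component, in a form suitable for the classical Rellich--Vekua theorem. The connectedness hypothesis on $\Omega^-$ enters precisely at this stage, to upgrade the vanishing in the unbounded component to vanishing throughout $\Omega^-$.
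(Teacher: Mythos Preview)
Your reduction to injectivity via Theorem~\ref{thm:ENFredwell} and the identification of the four null spaces $N^\pm L_2\cap E_k^\pm L_2$ is correct and matches the paper. The energy identity~\eqref{eq:enid} and its use in the case $\im k>0$, both interior and exterior, is also essentially the paper's argument.

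The gap is in your treatment of $k\in\R\setminus\{0\}$. You propose to work componentwise: apply Green's identity to each Helmholtz component $f_s$ and control the boundary term $\im\int_\Sigma \bar f_s\,\partial_\nu f_s\,d\sigma$ using $N^\pm f=0$ and $\dirac f=ikf$. But the boundary condition $N^\pm f=0$ is a pointwise multivector constraint that mixes the $f_s$ through $\nu(x)$; it says nothing directly about individual components or their normal derivatives. Even after summing over $s$ and using $\partial_\nu f=\nu(ikf-\dirac_T f)$, one is left with $\im\int_\Sigma(f,\nu\dirac_T f)\,d\sigma$, and showing this vanishes is a nontrivial integration-by-parts on $\Sigma$ that you have not carried out. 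The phrase ``the boundary term on $\Sigma$ is controlled'' is precisely where the argument is missing.

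The paper avoids this entirely by staying at the Dirac level. The same Stokes identity you used, now on $\Omega^-\cap\{|x|<R\}$, together with $(f,\nu f)=0$ on $\Sigma$, gives directly $\int_{|x|=R}(f,\tfrac{x}{|x|}f)\,d\sigma=0$ for all large $R$. Then the pointwise Clifford identity
\[
2|f|^2=\Big|\big(\tfrac{x}{|x|}-1\big)f\Big|^2+2\big(\tfrac{x}{|x|}f,f\big)
\]
integrated over $\{|x|=R\}$, combined with the Dirac radiation condition $(\tfrac{x}{|x|}-1)f\to 0$, yields $\int_{|x|=R}|f|^2\,d\sigma\to 0$ immediately. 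Only then does one invoke Rellich's lemma and connectedness of $\Omega^-$. This is the missing key step in your proposal: replace the componentwise Green/Sommerfeld detour by this multivector identity.
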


\begin{proof}
    For a solution $f$ to $\dirac f=ikf$  in $\Omega^+$, we apply Stokes' theorem and obtain
    $$
      \int_{\Sigma} (f,\nu f) d\sigma= \int_{\Omega^+}\Big( (f,ikf)+ (ik f,f) \Big) dx= -\im k\int_{\Omega^+} |f|^2 dx.
    $$
    If $f$ belongs to $E_k^+L_2\cap N^+L_2$ or  $E_k^+L_2\cap N^-L_2$, then $(f,\nu f)=0$. If $\im k>0$, this forces $f=0$.
    
    For a solution $f$ to $\dirac f=ikf$ in $\Omega^-$, we apply Stokes' theorem and obtain
    $$
     \int_{|x|=R}(f,\tfrac x{|x|}f)d\sigma- \int_{\Sigma} (f,\nu f) d\sigma= -\im k\int_{\Omega^-\cap\{|x|<R\}} |f|^2 dx.
    $$
    If $f$ belongs to $E_k^-L_2\cap N^+L_2$ or  $E_k^-L_2\cap N^-L_2$, then $(f,\nu f)=0$ on $\Sigma$. 
    If $\im k>0$, this forces $f=0$. since in this case $f$ decays exponentially at $\infty$.
    When $k\in \R\setminus\{0\}$ we instead conclude that $ \int_{|x|=R}(f,\tfrac x{|x|}f)d\sigma=0$ for all large $R$.
    Next we note the identity  
     $$
     2|f|^2= |(\tfrac x{|x|}-1)f|^2+ 2(\tfrac x{|x|}f,f).
   $$
   Integrating this over the sphere $|x|=R$, we obtain $\lim_{R\to\infty}\int_{|r|=R}|f|^2d\sigma=0$
   using the Dirac radiation condition satisfied by $f\in E_k^- L_2$ for the term $(\tfrac x{|x|}-1)f$.
   Rellich's lemma shows that $f=0$ since $\Omega^-$ is connected and $\Delta f+k^2 f=0$.
   See \cite{McMi, Rspin1} for more details.
\end{proof}

\section{The spin ansatz and new integral equations}   \label{sec:spin}

In this section we construct new ansatzes for the boundary value problems \eqref{map:inttan}- \eqref{map:extnor}.
Instead of using the ansatzes \eqref{map:tanint}- \eqref{map:norext}, we replace the reflection operator $N$ by the reflection operator
$$
  Sf(x) =  \nu(x) f(x),\qquad x\in \Sigma.
$$
To explain why $E_k$ together with boundary conditions $S$ should yield boundary value problems with better solvability properties than $E_k$ and $N$,
we need to explain some operator algebra. 
In this we follow \cite{Ax1} and consider the operator algebra generated by two reflection operators $A$ and $B$, that is $A^2=B^2=I$, abstractly
on a Hilbert space $\mH$. We think of $B$ as encoding the differential equation through the abstract Hardy subspace projections
$B^\pm=\tfrac 12(I\pm B)$, and of $A$ as encoding boundary conditions through the two complementary projections $A^\pm=\tfrac 12(I\pm A)$.
The most important operators to describe the geometry between $A$ and $B$ are the {\em cosine operator}
$$
 C=\tfrac 12( AB+BA)
$$
and the {\em rotation operators}
$$
 AB \qquad\text{and}\qquad BA.
$$
Note that $(BA)^{-1}=AB$.

\begin{prop}    \label{prop:opinvequiv}
  For two given reflection operators $A$ and $B$, the following are equivalent.
  \begin{itemize}
     \item[ {\rm (i)}] The four restricted projections $A^+: B^+\mH\to A^+\mH$, $A^+: B^-\mH\to A^+\mH$, $A^-: B^+\mH\to A^-\mH$
     and $A^-: B^-\mH\to A^-\mH$ are isomorphisms. 
      \item [{\rm (ii)}] The four compressed projections $A^+B^+: A^+\mH\to A^+\mH$, $A^-B^-: A^-\mH\to A^-\mH$, $A^+B^-: A^+\mH\to A^+\mH$ 
      and $A^-B^+: A^-\mH\to A^-\mH$ are isomorphisms. 
     \item [{\rm (iii)}] The spectrum of the rotation operator $AB$ does not contain $+1$ or $-1$.
      \item [{\rm (iv)}] The spectrum of the cosine operator $C$ does not contain $+1$ or $-1$.
  \end{itemize}
\end{prop}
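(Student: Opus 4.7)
The plan is to work in the block representation relative to the splitting $\mH = A^+\mH \oplus A^-\mH$, so that $A = \mathrm{diag}(I,-I)$ and
\[ B = \begin{pmatrix} B_{11} & B_{12} \\ B_{21} & B_{22} \end{pmatrix}. \]
The identity $B^2 = I$ yields the intertwining relations $(I \pm B_{11})B_{12} = B_{12}(I \mp B_{22})$ and $B_{21}(I \pm B_{11}) = (I \mp B_{22})B_{21}$, as well as $B_{12}B_{21} = I - B_{11}^2$ and $B_{21}B_{12} = I - B_{22}^2$. I will show that each of the conditions (i)--(iv) is equivalent to the single auxiliary condition (*): \emph{the four operators $I \pm B_{11}$ on $A^+\mH$ and $I \pm B_{22}$ on $A^-\mH$ are all invertible}.

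Three of the four equivalences are essentially bookkeeping. For (iv), a direct calculation gives $C = \mathrm{diag}(B_{11},-B_{22})$, so $I \pm C$ is block diagonal with diagonal blocks $I \pm B_{11}$ and $I \mp B_{22}$, yielding (iv) $\Leftrightarrow$ (*). For (ii), a similar computation shows that the compressed projections satisfy $A^+B^\pm|_{A^+\mH} = \tfrac{1}{2}(I \pm B_{11})$ and $A^-B^\pm|_{A^-\mH} = \tfrac{1}{2}(I \pm B_{22})$, whence (ii) $\Leftrightarrow$ (*). For (iii) $\Leftrightarrow$ (iv), set $T := AB$, which is invertible with $T^{-1} = BA$; then $C = \tfrac{1}{2}(T + T^{-1}) = p(T)$ for the analytic function $p(\lambda) = \tfrac{1}{2}(\lambda + \lambda^{-1})$ defined on $\C \setminus \{0\}$, and the spectral mapping theorem together with the observation that the equation $p(\lambda) = \pm 1$ forces $\lambda = \pm 1$ gives the equivalence.

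The main work is (i) $\Leftrightarrow$ (*). The underlying fact here is that for any bounded projection $P$ on $\mH$ and any closed subspace $V \subseteq \mH$, the restriction $P|_V : V \to P\mH$ is an isomorphism if and only if $\mH = \ker P \oplus V$ as a topological direct sum. For (*) $\Rightarrow$ (i), I would verify each of the four restricted projections directly; for instance, for $A^+|_{B^+\mH}$, any $(0, g_2) \in A^-\mH \cap B^+\mH$ satisfies $(I - B_{22})g_2 = 0$ and hence vanishes, giving injectivity, while for surjectivity onto $A^+\mH$ one sets $g_2 := (I - B_{22})^{-1}B_{21}f_1$ for a prescribed $f_1 \in A^+\mH$ and uses the intertwining identity $B_{12}(I - B_{22})^{-1} = (I + B_{11})^{-1}B_{12}$ to verify that $(f_1, g_2)$ lies in $B^+\mH$. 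For the converse (i) $\Rightarrow$ (*), one factors each compression as $A^+B^+|_{A^+\mH} = (A^+|_{B^+\mH}) \circ (B^+|_{A^+\mH})$; the first factor is one of the restricted projections in (i), and the second is an isomorphism if and only if $\mH = B^-\mH \oplus A^+\mH$, which by the same direct-sum criterion is equivalent to $A^-|_{B^-\mH}$ being an isomorphism --- another condition in (i). The main obstacle is precisely this bookkeeping of dual correspondences, ensuring that the four conditions listed in (i) collectively yield the isomorphism property for all eight restricted projections $A^\pm|_{B^\pm\mH}$ and $B^\pm|_{A^\pm\mH}$ used in the composition argument.
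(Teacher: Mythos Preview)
Your proof is correct but takes a different route from the paper's. The paper works coordinate-free and obtains (i)$\Leftrightarrow$(iii) and (ii)$\Leftrightarrow$(iv) in one stroke from the identities
\[
\tfrac12(I\pm AB)=A^+B^\pm+A^-B^\mp, \qquad \tfrac12(I\pm C)=A^+B^\pm A^++A^-B^\mp A^-,
\]
which display $\tfrac12(I\pm AB)$ as block-diagonal for the splitting $B^+\mH\oplus B^-\mH\to A^+\mH\oplus A^-\mH$ (with the restricted projections from~(i) on the diagonal), and $\tfrac12(I\pm C)$ as block-diagonal for $A^+\mH\oplus A^-\mH$ (with the compressions from~(ii) on the diagonal). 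The link (iii)$\Leftrightarrow$(iv) is then the algebraic identity $(I\pm AB)A(I\pm AB)A=2(I\pm C)$. Your approach instead introduces the auxiliary condition~(*) on $I\pm B_{11}$ and $I\pm B_{22}$: the equivalences (ii)$\Leftrightarrow$(*)$\Leftrightarrow$(iv) are really the same diagonalisation as the paper's, your spectral-mapping argument for (iii)$\Leftrightarrow$(iv) is a clean alternative to the paper's identity, but for (i) you work harder, building preimages explicitly from the intertwining relations and invoking the direct-sum criterion for projections. The paper's identities are shorter here and are reused immediately afterwards (to produce explicit inverses when $AB+BA=0$), while your block computation is more pedestrian but perhaps easier to find if one has not spotted those identities.
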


Note that (iv) is symmetric under swapping $A$ and $B$, and therefore so is (i), (ii) and (iii).

\begin{proof}
We have identities
\begin{eqnarray*}
   \tfrac 12(I+AB) = A^+B^++A^-B^-,\\
   \tfrac 12(I-AB) = A^+B^-+A^-B^+,\\
   \tfrac 12(I+C)= A^+B^+A^++A^-B^-A^-,\\
   \tfrac 12(I-C)= A^+B^-A^++A^-B^+A^-,
\end{eqnarray*}
from which the equivalences ${\rm (i)}\Leftrightarrow {\rm (iii)}$ and ${\rm (ii)}\Leftrightarrow {\rm (iv)}$ follow.
The equivalence  ${\rm (iii)}\Leftrightarrow{\rm (iv)}$ follows from identities 
\begin{eqnarray*}
  (I+AB)A(I+AB)A= 2(I+C),\\
   (I-AB)A(I-AB)A= 2(I-C).
\end{eqnarray*}
\end{proof}

\begin{ex} 
  The simplest example is when $\mH=\C^2$ with
  $$
     A=\begin{bmatrix} 1 & 0 \\ 0 & -1  \end{bmatrix}\qquad\text{and} \qquad 
      B=\begin{bmatrix} \cos(2\alpha) & \sin(2\alpha) \\ \sin(2\alpha) & -\cos(2\alpha)  \end{bmatrix},
  $$
  for some $0\le\alpha\le \pi/2$.
  In this case the ranges of the four spectral projections are $A^+=\text{{\rm span\,}}(1,0)^t$, $A^-=\text{{\rm span\,}}(0,1)^t$,
  $B^+=\text{{\rm span\,}}(\cos\alpha,\sin\alpha)^t$, $B^-=\text{{\rm span\,}}(-\sin\alpha,\cos\alpha)^t$.
  We calculate
  $$
     \tfrac 12(AB+BA)=\begin{bmatrix} \cos(2\alpha) & 0 \\ 0 & \cos(2\alpha)  \end{bmatrix}\qquad\text{and} \qquad 
      BA=\begin{bmatrix} \cos(2\alpha) & -\sin(2\alpha) \\ \sin(2\alpha) & \cos(2\alpha)  \end{bmatrix},
  $$
      with spectra $\sigma(\tfrac 12(AB+BA))=\{\cos(2\alpha)\}$ and $\sigma(BA)=\{e^{i2\alpha },e^{-i2\alpha}\}$.
      The only cases when some restricted projections fail to be invertible are when $\alpha=0$ or $\alpha=\pi/2$, in which case
      these spectra hit $\{+1,-1\}$.
      The optimal geometry from the point of boundary value problems is when $\alpha=\pi/4$, in which case the spectra are $\{0\}$ and $\{+i, -i\}$ respectively.
\end{ex}

Consider now $B= E_k$ and $A=S$. In this case we note that the rotation operator $BA$ is given by
$$
  E_kSf(x) = 2\pv \int_\Sigma \Psi_k(y-x) f(y) d\sigma, \qquad x\in\Sigma,
  $$
since $\nu^2=1$.
This is really the core observation of this paper.
For $k=0$, we note that this yields a skew-symmetric operator $(ES)^*=-ES$, with purely imaginary spectrum.
In particular it stays well away from $\pm 1$, and therefore it is clear that all boundary value problems described by $E$ and $S$
are well posed.
Note that this follows from Proposiition~\ref{prop:opinvequiv} by abstract arguments and is not using the strong Lipschitz assumption 
on $\Sigma$, in contrast to well posedness for the pair $E$ and $N$ in Section~\ref{sec:NEwp}.

  For non-zero $k$, we have at least that $(E_kS)^*=-E_kS$ modulo compact operators.
In this way, we see from abstract considerations only, that  all restricted projections
\begin{align}
   S^+&: E_k^+L_2 \to S^+L_2,   \label{map:inttans} \\
      S^-&: E_k^+L_2 \to S^-L_2,   \label{map:intnors} \\
         S^+&: E_k^-L_2 \to S^+L_2, \label{map:exttans} \\
            S^-&: E_k^-L_2 \to S^-L_2,   \label{map:extnors} \\
   E_k^+&: S^+L_2 \to E_k^+L_2,   \label{map:tanints}\\   
      E_k^+&: S^-L_2 \to E_k^+L_2,   \label{map:norints}\\   
         E_k^-&: S^+L_2 \to E_k^-L_2,  \label{map:tanexts}\\  
            E_k^-&: S^-L_2 \to E_k^-L_2. \label{map:norexts}  
\end{align}
  are Fredholm operators with index zero. We summarize and complement with injectivity results in the following proposition.
 
\begin{prop}   \label{prop:Sinj}
    The maps  \eqref{map:inttans}- \eqref{map:norexts} are all Fredholm operators with index zero when $\im k\ge 0$, and isomorphisms 
    when $k=0$. Moreover, the norms of these inverses (Fredholm inverses) are bounded by $2$, when $k=0$ $(\im k\ge 0)$.
  The maps \eqref{map:intnors}, \eqref{map:exttans}, \eqref{map:norints} and \eqref{map:tanexts} are  isomorphisms when
  $\im k\ge 0$.
\end{prop}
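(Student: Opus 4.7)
The plan is to apply Proposition~\ref{prop:opinvequiv} to the pair $A=S$, $B=E_k$. The structural observation already noted in the text is that the rotation operator
$$
  E_k S f(x) = 2\pv\int_\Sigma \Psi_k(y-x) f(y)\,d\sigma(y), \qquad x\in\Sigma,
$$
carries no $\nu$-factor, because $\nu(y)^2=1$ cancels the normal in \eqref{eq:CauchyCliffop}. For $k=0$, the adjoint identity $(Ef,\nu g)=-(f,\nu Eg)$ from the proof of Theorem~\ref{thm:ENFredwell} rewrites as $E_0^* = -SE_0S$, hence $(E_0S)^* = -E_0S$. For general $\im k\ge 0$, the kernel difference $\Psi_k-\Psi_0$ is weakly singular, so $E_k-E_0$ is compact on $L_2(\Sigma)$, and both $E_k S$ and $SE_k = S(E_k S)S$ are skew-adjoint modulo compact operators.

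Granted this, the Fredholm half is automatic: the operator $I\pm SE_k$ is normal modulo compact with essential spectrum contained in $\pm 1+i\R$, hence Fredholm of index zero with Fredholm inverse of norm at most $1$, and invertible with the same bound when $k=0$. Proposition~\ref{prop:opinvequiv}, applied in the Calkin algebra for $\im k\ge 0$ and on the nose for $k=0$, then yields that the eight maps \eqref{map:inttans}--\eqref{map:norexts} are all Fredholm of index zero, and all isomorphisms when $k=0$. The bound $2$ on the (Fredholm) inverses drops out of the explicit identity
$$
  (I\pm SE_k)h = h\pm Sh = 2 S^\pm h, \qquad h\in E_k^+L_2,
$$
which uses only $E_kh=h$. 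Inverting gives $h = 2(I\pm SE_k)^{-1}g$ whenever $g = S^\pm h$ lies in the range, and thus $\|h\|\le 2\|g\|$. The analogous identities with $h\in E_k^-L_2$ cover the remaining restricted projections $S^\pm|_{E_k^-L_2}$, and the symmetric identities $(I\pm E_k S)h = 2E_k^\pm h$ for $h\in S^\pm L_2$ give the same bound for the four ansatz maps.

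Finally, to promote \eqref{map:intnors}, \eqref{map:exttans}, \eqref{map:norints}, \eqref{map:tanexts} to genuine isomorphisms for all $\im k\ge 0$, it suffices by index zero Fredholmness to establish injectivity. The kernels pair as $E_k^+L_2\cap S^+L_2$ (for \eqref{map:intnors} and \eqref{map:tanexts}) and $E_k^-L_2\cap S^-L_2$ (for \eqref{map:exttans} and \eqref{map:norints}). For the interior intersection I would mirror the Stokes argument from Proposition~\ref{prop:Ninj}: for $f\in E_k^+L_2$,
$$
  \int_\Sigma (f,\nu f)\,d\sigma = -2\im k\int_{\Omega^+}|f|^2\,dx,
$$
and the boundary condition $\nu f=f$ turns the left side into $\int_\Sigma|f|^2\,d\sigma$, which must vanish when $\im k\ge 0$; extending $f$ by zero across $\Sigma$ then produces a compactly supported distributional solution of $\dirac F=ikF$ on $\R^n$, forcing $f\equiv 0$. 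The exterior intersection $E_k^-L_2\cap S^-L_2$ is handled by the analogous computation on $\Omega^-\cap B_R$, the Dirac radiation condition, and Rellich's lemma (assuming $\Omega^-$ connected when $k\in\R\setminus\{0\}$), exactly as in Proposition~\ref{prop:Ninj}. The one step requiring real care is the compactness of $E_k-E_0$ on $L_2(\Sigma)$, a weakly singular kernel estimate; everything else is then a mechanical application of the abstract operator algebra of Proposition~\ref{prop:opinvequiv} together with the Stokes identities.
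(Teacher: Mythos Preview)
Your argument follows the paper's proof closely: skew-adjointness of $E_0S$, compactness of $E_k-E_0$, the abstract identities from Proposition~\ref{prop:opinvequiv}, and the Stokes/Rellich injectivity computation are exactly the ingredients used there. Two small points are worth tightening. First, Proposition~\ref{prop:opinvequiv} applied in the Calkin algebra gives only that the eight restricted projections are Fredholm, not that each has index zero; the block identities $\tfrac12(I\pm SE_k)=S^\pm E_k^\pm+S^\mp E_k^\mp$ show only that the indices of the two summands sum to zero. The paper closes this with the method of continuity from the invertible case $k=0$ (equivalently, each restricted projection is a compact perturbation of its $k=0$ counterpart), and you should say so. Second, your exterior injectivity step inherits the connectedness hypothesis on $\Omega^-$ from Proposition~\ref{prop:Ninj}, but the proposition as stated does not assume this; the paper disposes of bounded components of $\Omega^-$ by rerunning the interior argument (the sign of $(f,\nu f)$ on $\partial$ of such a component is again favourable), which you should include.
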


\begin{proof}
 The bounds of the (Fredholm) inverses follows from the skew-adjointness of $ES$ by the formulas in the proof of 
 Proposition~\ref{prop:opinvequiv}.
 The fact that the index is zero follows from the method of continuity for $k\ne 0$, since $k\mapsto E_k$ is continuous.

  The injectivity results that remains to prove are that $E_k^+L_2\cap S^+L_2=\{0\}$ and  $E_k^-L_2\cap S^-L_2=\{0\}$.
 The idea of proof is similar to Proposition~\ref{prop:Ninj}.
      For a solution $f$ to $\dirac f=ikf$  in $\Omega^+$, we apply Stokes' theorem and obtain
    $$
      \int_{\Sigma} (f,\nu f) d\sigma= \int_{\Omega^+}\Big( (f,ikf)+ (ik f,f) \Big) dx= -\im k\int_{\Omega^+} |f|^2 dx.
    $$
    If $f\in E_k^+L_2\cap S^+L_2$, then $(f,\nu f)=|f|^2\ge 0$. If $\im k\ge 0$, this forces $f=0$ on $\Sigma$ and therefore
    by the Cauchy formula also $f=0$ in $\Omega^+$.
    
    For $f\in E_k^-L_2\cap S^-L_2$ we have $(f,\nu f)=-|f|^2$ on $\Sigma$, and a similar application of Stokes' theorem yields
    $$
     \int_{|x|=R}(f,\tfrac x{|x|}f)d\sigma+ \int_{\Sigma} |f|^2 d\sigma= -\im k\int_{\Omega^-\cap\{|x|<R\}} |f|^2 dx.
    $$ 
    If $\im k>0$, this forces $f=0$ as before, letting $R\to\infty$. since in this case $f$ decays exponentially at $\infty$.
    Also when $k=0$, $f$ has enough decay for us to conclude. (However, the case $k=0$ is already taken care of by the skew-adjointness
    of $ES$.)
    When $k\in \R\setminus\{0\}$ we instead conclude that $ \int_{|x|=R}(f,\tfrac x{|x|}f)d\sigma\le 0$ for all large $R$.
       Integrating the identity  
     $$
     2|f|^2= |(\tfrac x{|x|}-1)f|^2+ 2(\tfrac x{|x|}f,f)
   $$
 over the sphere $|x|=R$, we obtain $\lim_{R\to\infty}\int_{|r|=R}|f|^2d\sigma=0$ since 
   the Dirac radiation condition for $f$ at infinity shows the the first term on the right vanishes at infinity.
   As in Proposition~\ref{prop:Ninj}, this forces $f=0$ by Rellich's lemma if $\Omega^-$ is connected. If $\Omega^-$ have bounded connected
   components, we can argue as for  $E_k^+L_2\cap S^+L_2=\{0\}$ to conclude that $f=0$ also in these components.
\end{proof}

Proposition~\ref{prop:Sinj} is the main result that we need to obtain the announced spin integral equations for solving Dirac 
boundary value problems.
The idea is to use the ansatz~\ref{map:norints}, which is always invertible by Proposition~\ref{prop:Sinj},
 for the interior boundary value problems 
\eqref{map:inttan} and \eqref{map:intnor}. This leads to the integral equations
\begin{equation}   \label{eq:prespinint}
  N^\pm E_k^+ : S^-L_2\to N^\pm L_2. 
\end{equation}
Similarly, for the exterior boundary valur problems \eqref{map:exttan} and \eqref{map:extnor}, we use the ansatz
\eqref{map:tanexts},  which is also  always invertible by Proposition~\ref{prop:Sinj}. This gives integral equations
\begin{equation}   \label{eq:prespinext}
    N^\pm E_k^- : S^+L_2\to N^\pm L_2. 
\end{equation}
This is almost what we want: the integral operators \eqref{eq:prespinint} and \eqref{eq:prespinext} are invertible 
if and only if the corresponding boundary value problems \eqref{map:inttan} and \eqref{map:intnor}, or
\eqref{map:exttan} and \eqref{map:extnor} respectively, are well posed,
and both the domains and ranges are simple pointwise defined subspaces of $L_2(\Sigma)$.
We can however improve these integral equations a little further for numerical implementation, so that both the domains and ranges 
are the same subspace, and not depending on $\nu(x)$ like $S^\pm L_2$ and $N^\pm L_2$ do.
To this end, we apply again the abstract setup for boundary value problems described in this section.
Consider the reflection operator
$$
   Tf(x) = \inv{f(x)},\qquad x\in\Sigma,
 $$ 
given by pointwise involution of the multivector field.
The corresponding spectral subspaces are
$$
  T^+L_2 = L_2(\Sigma;\wedge^\ev\C^n) \qquad \text{and}\qquad 
    T^-L_2 = L_2(\Sigma;\wedge^\od\C^n).
$$
Computing the relevant cosine operators, we have
\begin{eqnarray*}
  ( NS+SN )f= \nu\inv{\nu f}\nu+\nu\nu\inv f\nu=0, \\
  TS+ST)f=\inv{\nu f}+  \nu\inv f=0,\\
  (TN+NT)f= \inv{n\inv f\nu}+\nu \inv{\inv f}\nu= 2\nu f\nu\ne 0.
\end{eqnarray*}

The proof of Proposition~\ref{prop:opinvequiv}
 give us explicitly invertible maps between subspaces $S^\pm L_2$
and subspaces $N^\pm L_2$ on the one hand, and  explicitly invertible maps between subspaces $S^\pm L_2$
and subspaces $T^\pm L_2$ on the other hand. 
Indeed, we note that if $A$ and $B$ are two reflection operators on a Hilbert space $\mH$ satisfying $AB+BA=0$,
then  the associated eight restricted projections are pairwise inverse, up to a factor $2$, as follows.
\begin{eqnarray*}
   (B^+: A^+\mH\to B^+\mH)^{-1} = 2(A^+: B^+\mH\to A^+\mH) \\
   (B^-: A^-\mH\to B^-\mH)^{-1} = 2(A^-: B^-\mH\to A^-\mH) \\
   (B^-: A^+\mH\to B^-\mH)^{-1} = 2(A^+: B^-\mH\to A^+\mH) \\
   (B^+: A^-\mH\to B^+\mH)^{-1} = 2(A^-: B^+\mH\to A^-\mH) 
\end{eqnarray*}

We can now formulate the main result of this paper, namely spin  integral equations for solving the boundary value problems for
the differential equation $\dirac f=ikf$ with prescribed tangential or normal part of the field at the boundary. 

\begin{thm}   \label{thm:main}
     Let $\Omega^+\subset\R^n$ be a bounded strongly Lipschitz domain, with exterior domain $\Omega^-$, and consider a wave number $\im k\ge 0$.
     \begin{itemize}
     \item
       The interior boundary value problem to find a solution $f$ to $\dirac f=ikf$ in $\Omega^+$ with prescribed tangential/normal part 
       $N^\pm f=g$ at $\Sigma$ is well posed in the sense that $N^\pm: E_k^+L_2\to N^\pm L_2$ is invertible, if and only if the singular integral equation
       $$
          T^+S^-N^\pm E_k^+S^- h=T^+S^- g
       $$
       is uniquely solvable for $h\in T^+L_2$. In this case the solution to the boundary value problem is $f=E_k^+ S^-h$ at $\Sigma$.
     \item
       The exterior boundary value problem to find a solution $f$ to $\dirac f=ikf$ in $\Omega^\pm$ with prescribed tangential/normal part 
       $N^\pm f=g$ at $\Sigma$ is well posed in the sense that $N^\pm: E_k^- L_2\to N^\pm L_2$ is invertible, if and only if the singular integral equation
       $$
          T^+S^+N^\pm E_k^-S^+ h=T^+S^+ g
       $$
       is uniquely solvable for $h\in T^+L_2$. In this case the solution to the boundary value problem is $f=E_k^- S^+h$ at $\Sigma$.
     \end{itemize}
\end{thm}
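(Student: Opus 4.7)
The plan is to reduce the theorem to a purely algebraic statement about compositions of restricted projections, exploiting the anticommutation relations $NS+SN=0$ and $TS+ST=0$ computed just before the theorem, together with the invertibility of the ansatz maps established in Proposition~\ref{prop:Sinj}.

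I would treat the interior case as representative; the exterior case is identical after replacing $E_k^+$ by $E_k^-$ and $S^-$ by $S^+$, using the ansatz map \eqref{map:tanexts} in place of \eqref{map:norints}. By Proposition~\ref{prop:Sinj}, the ansatz map $E_k^+ \colon S^- L_2 \to E_k^+ L_2$ is an isomorphism with inverse norm at most $2$. By the general principle displayed just before the theorem, namely that two anticommuting reflection operators $A,B$ give rise to pairwise inverse restricted projections between $A^\pm \mH$ and $B^\pm \mH$, the relations $TS+ST=0$ and $NS+SN=0$ yield four further isomorphisms: $S^-\colon T^+ L_2 \to S^- L_2$, $T^+\colon S^- L_2 \to T^+ L_2$, $S^-\colon N^\pm L_2 \to S^- L_2$, and their respective inverses (up to the factor $2$).

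With these four pieces in hand, the singular integral operator $T^+S^- N^\pm E_k^+ S^-$ factors as the chain
\[
  T^+L_2 \xrightarrow{S^-} S^-L_2 \xrightarrow{E_k^+} E_k^+L_2 \xrightarrow{N^\pm} N^\pm L_2 \xrightarrow{S^-} S^-L_2 \xrightarrow{T^+} T^+L_2,
\]
in which the first, second, fourth, and fifth arrows are isomorphisms by the above, while the middle arrow is precisely the boundary value map whose invertibility defines well-posedness. Hence the composition $T^+S^-N^\pm E_k^+S^-\colon T^+L_2 \to T^+L_2$ is invertible if and only if $N^\pm \colon E_k^+L_2 \to N^\pm L_2$ is, which establishes the biconditional.

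It remains to verify the solution formula. Assuming well-posedness, let $h\in T^+L_2$ solve $T^+S^- N^\pm E_k^+ S^- h = T^+S^- g$ and set $f := E_k^+ S^- h$. By construction $f \in E_k^+ L_2$, so $f$ is the boundary trace of a solution to $\dirac f = ikf$ in $\Omega^+$. The integral equation rewrites as $T^+S^-(N^\pm f) = T^+S^- g$, and since $T^+S^-$ restricts to an isomorphism $N^\pm L_2 \to T^+L_2$ (composition of the last two isomorphisms in the chain), we deduce $N^\pm f = g$, as required. The only real subtlety is bookkeeping: ensuring that each of the four auxiliary bijections is attached to the correct anticommutation and the correct pair of $\pm$ signs. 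No analytic input beyond Proposition~\ref{prop:Sinj} is needed.
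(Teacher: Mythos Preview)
Your proof is correct and follows essentially the same route as the paper: both arguments factor the operator $T^+S^-N^\pm E_k^+S^-$ through the chain $T^+L_2 \to S^-L_2 \to E_k^+L_2 \to N^\pm L_2 \to T^+L_2$, invoking Proposition~\ref{prop:Sinj} for the ansatz map and the anticommutation relations $NS+SN=0$, $TS+ST=0$ for the remaining links. Your version is slightly more explicit than the paper's in that you split the paper's single map $T^+S^-\colon N^\pm L_2\to T^+L_2$ into the two separate isomorphisms $S^-\colon N^\pm L_2\to S^-L_2$ and $T^+\colon S^-L_2\to T^+L_2$, and you also spell out the verification of the solution formula $f=E_k^+S^-h$, but these are cosmetic differences only.
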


\begin{proof}
For the interior boundary value problems, the ansatz 
$E_k^+: S^-L_2 \to E_k^+L_2$ is an invertible map for any $\im k\ge 0$ by Proposition~\ref{prop:Sinj}.
For the exterior boundary value problems, the ansatz 
$E_k^-: S^+L_2 \to E_k^-L_2$ is an invertible map for any $\im k\ge 0$ by Proposition~\ref{prop:Sinj}.
We have also seen that $T^+S^\pm: N^\pm L_2\to T^+L_2$ and $S^\pm: T^+L_2\to S^\pm L_2$ are invertible maps.
These invertible maps enable us to fomulate the boundary value problems as singular integral equations on
the subspace $L_2(\Sigma;\wedge^\ev\C^n)$ as stated.
\end{proof}

\begin{ex}  \label{ex:2dint}
  We saw in Example~\ref{ex:planeDirac} how the Dirichlet problem for the Laplacian in $\Omega^+\subset\R^2$, or equivalently the 
  Hilbert  boundary value problem for analytic functions with prescribed real part on $\Sigma$, can be formulated in terms of invertibility 
  of $N^+: E^+L_2\to N^+L_2$.

   Theorem~\ref{thm:main} allow us to solve this boundary value problem as a real linear singular integral equation in the space $L_2(\Sigma; \C)$
   as follows.  
   Given the real valued Dirichlet datum $g\in L_2(\Sigma;\R)$, we compute $T^+S^-g=\tfrac 12 g$.
   To see that the equation $T^+S^-N^+ E^+S^- h=T^+S^- g$ reduces to \eqref{eq:2dspineq} using complex algebra, 
   we note that $T^+S^-N^+S^-h= \tfrac 14 h$ since $S$ anti-commute with $T$ and $N$.
   Writing the Cauchy integral with complex algebra, we have
   $$
     ES^-h(z)= \frac 1{2\pi j} \pv\int_\Sigma \frac {h(w)dw}{w-z} -\frac {e_1}{2\pi}\pv\int_\Sigma\frac {h(w)|dw|}{\rev w-\rev z}.
   $$
   Computing $4T^+S^-N^+(ES^-h)$, we obtain  \eqref{eq:2dspineq} with $i$ replaced by $j$.
   In the formula $f=E^+ S^-h$ for the solution to the boundary value problem, we need only to evaluate the $\wedge^\ev\R^2$
   part of $f$: The auxiliary anti-analytic function given by the $\wedge^1\R^2$ part will be trivial due to our choice of $g$.
   Thus we end up with the classical Cauchy integral of $h$ for the solution $u+jv$.
\end{ex}

\begin{ex}    \label{ex:maxwellintequ}
  We saw in Example~\ref{ex:maxwell} how the Maxwell scattering problem in $\Omega^-\subset\R^3$ against a perfect conductor $\Omega^+$
  can be formulated in terms of invertibility of $N^+: E_k^-L_2\to N^+L_2$.
   Theorem~\ref{thm:main} allows us to solve this linear equation as a singular integral equation in the space $L_2(\Sigma; \C^4)$
   as follows.
  Given the incoming electric and magnetic fields $E^0$ and $H^0$, we compute the tangential Dirac data $g= -N^+(E^0+*H^0)|_\Sigma$.
  Note that $g$ depends on the tangential part of $E^0$ and the normal part of $H^0$.
  Compute the bivector field 
  $$
     \tilde g:= T^+S^+g= - \nu\wedg E^0- (\nu, H^0)(*\nu) : \Sigma\to\wedge^2\C^3.
  $$
  It is straightforward to compute that $T^+S^+N^+E_k^-S^+h= T^+S^+g$ amounts to 
  solving the singular integral equation
  \begin{equation}  \label{eq:maxwellspineq}
    \frac 14 h(x)+   M(x)\,\pv\int_\Sigma \Psi_k(y-x) (1+ \nu(y))h(y)d\sigma(y)=2 \tilde g(x),\qquad x\in\Sigma,
  \end{equation}
  for $h\in L_2(\Sigma;\wedge^\ev\C^3)$,
  where $M$ is the multiplier 
  $$
  M=T^+S^+N^+: \alpha+ a+*b+*\beta\mapsto \alpha+\nu\wedg a+(\nu,b)(*\nu).
  $$
  The solution to the Maxwell scattering problem is then
  $$
    E(x)+*H(x)=- \int_\Sigma \Psi_k(y-x) (1+ \nu(y))h(y)d\sigma(y), \qquad x\in\Omega^-.
  $$
  This is the algorithm that we propose in this paper for solving the Maxwell scattering problem against a perfect conductor.
\end{ex}

\section{Maxwell scattering in piecewise constant media}   \label{sec:dielectric}

We formulated in Example~\ref{ex:maxwellintequ} a spin integral equation for solving the Maxwell scattering problem 
against a perfect conductor, which is a singular integral equation for four scalar functions.
In this section, we fomulate a similar spin integral equation for solving more general scattering problems, for 
time-harmonic Maxwell's equations at frequency $\omega$.
We do not aim to present a complete solvability theory in this section, since it requires a solution of fundamental open problems.
Instead we fomulate the algorithm and describe the future work that is needed.

 We denote by $N$ the number of bounded materials, and write
$$
  \R^3= \Omega_0 \cup \Omega_1\cup\ldots\cup \Omega_N\cup\Sigma
$$
disjointly.
Here $\Omega_j$ are assumed to be bounded open sets, with Lipschitz regular boundaries $\partial \Omega_j\subset\Sigma$, $j=1,\ldots, N$, 
and $\Omega_0$ is the complement of a bounded Lipschitz domain.
The Lipschitz interface $\Sigma$ is $\Sigma= \partial\Omega_1\cup\ldots\cup\partial\Omega_N$.
Write $\Sigma_{i,j}=\clos{\Omega_i}\cap \clos {\Omega_j}$.
A unit normal vector at a boundary point $x\in\Sigma$, which is well defined almost everywhere, 
is denoted $n=n(x)$. By $n_j= n_j(x)$ at $x\in\partial\Omega_j$ we mean the unit normal vector which is outward pointing relative $\Omega_j$.

The region $\Omega_j$, $j=0,1,\ldots, N$, we assume represent a
 homogeneous, linear and isotropic material, with 
electric permittivity $\epsilon_j$, magnetic permeability $\mu_j$ and conductivity $\sigma_j$ as constant and scalar quantities.
We formulate Maxwell's equations in $\Omega_j$ as 
\begin{equation}  \label{eq:maxwell}
  \begin{cases}
     \nabla \cdot  H=0, \\
     \nabla \times E= ik_j H, \\
     \nabla\times H=-ik_jE, \\
     \nabla\cdot E=0,
  \end{cases}
\end{equation}
where the wave number is $k_j=\omega\alpha_j/\beta_j$, with $\alpha_j= (\epsilon_j+i\sigma_j/\omega)^{1/2}$ and $\beta_j= \mu_j^{-1/2}$.
We choose to normalise the fields so that $E$ denotes the geometric mean of the electric field and displacement, and $H$ denotes the
 geometric mean of the magnetic field and intensity, so that the square of the fields has energy density as dimension.
 In particular this means that at the interface $\Sigma$, we have jump conditions 
 which require continuity of 
 $$
         n \cdot  (\beta^{-1}H), \quad
     n \times (\alpha^{-1}E),\quad
     n\times (\beta H),\quad\text{and}\quad
     n\cdot (\alpha E)
 $$
if Maxwell's equations  for the original electric and magnetic fields are to hold in distributional sense in all $\R^3$.

The data in the scattering problem we seek to solve are incoming electric and magnetic fields 
$E^\inc$ and $H^\inc$
solving \eqref{eq:maxwell} with $k=k_0$ in $\Omega_0$.
Our problem is to solve for electric and magnetic fields $E^j$ and $H^j$
\begin{itemize}
\item  solving Maxwell's equations \eqref{eq:maxwell} with wave number $k_j$ in $\Omega_j$,
$j=0,1,\ldots,N$, 
\item  with $E^0$, $E^0$ satisfying the Silver--M\"uller radiation condition at infinity, see \cite[eq. (4)]{Rspin1}, 
\item 
and where
$$
   E^\inc+E^0+\ldots+ E^N\quad\text{and}\quad H^\inc+H^0+\ldots+ H^N
$$
solve Maxwell's equations in distributional sense across $\Sigma$.   
\end{itemize}

We note as before that in terms of the electromagnetic multivector field $F= E+ *H$, we can write Maxwell's equations \eqref{eq:maxwell}
as the Dirac equation
$$
   \dirac F= ik_j F
$$
in $\Omega_j$,  and well posedness in $L_2=L_2(\Sigma;\wedge\C^3)$ of  the scattering problem descibed above follows from invertibility
of the map
$$
   B_\Sigma:\bigoplus_{j=0}^N E_j L_2\to L_2: (f_j)_{j=0}^n\mapsto \sum_{j=0}^N N_j f_j.
$$
Here $E_jL_2$ denotes the image of $E^+_{k_j}L_2(\partial\Omega_j)$ under the inclusion $L_2(\partial\Omega_j)\subset L_2(\Sigma)$,
where $N_j$ denotes the map
$$
  N_j f(x) = \nu_j \wedg (\beta_j^{-1}T^+f+ \alpha_j^{-1}T^- f)+  \nu_j \lctr (\beta_jT^+f+ \alpha_jT^- f), \qquad x\in\partial\Omega_j,
$$
and $N_jf(x)=0$ when $x\in\Sigma\setminus \partial\Omega_j$.
Recall that $T^+$ is projection onto $L_2(\Sigma;\wedge^\ev\C^3)$ and $T^-$ is projection onto $L_2(\Sigma;\wedge^\od\C^3)$.
We  use the spin ansatz
$$
  S_\Sigma: L_2\to \bigoplus_{j=0}^N E_j L_2: f\mapsto (E_j S_j f)_{j=0}^N
$$
where 
\begin{align*}
  S_j f(x)&= \tfrac 12(1-\nu_j(x))f(x), \\
  E_j f(x) &= E_{k_j}^+(f|_{\partial\Omega_j})(x),
\end{align*}
when $x\in\partial\Omega_j$ and $S_jf(x)= E_jf(x)=0$ when $x\in\Sigma\setminus\partial\Omega_j$.
 
 With this setup we obtain the following spin integral equation for solving the above Maxwell scattering problem.
 
\begin{alg}   \label{alg:maxscat}
Let $E^\inc$ and $H^\inc$ be the incoming rescaled electric and magnetic fields in $\Omega^0$, and define
$$
g= N_0(E^\inc|_{\partial\Omega^0}+* H^\inc|_{\partial\Omega^0})\in L_2(\Sigma;\wedge\C^3).
$$
Solve the singular integral equation 
$$
  B_\Sigma S_\Sigma h=- g
$$
for $h\in L_2(\Sigma;\wedge\C^3)$.
Then the solution to the Maxwell scattering problem in this section is given by
$$
   E^j(x)+*H^j(x)=\frac 12 \int_{\partial\Omega_j} \Psi_k(y-x) (1-\nu_j(y)) h(y) d\sigma(y),\qquad x\in\Omega_j.
$$
\end{alg}

Since $S_j$ are complementary projections in $L_2(\Sigma)$, it follows from Proposition~\ref{prop:Sinj} that the spin 
ansatz $S_\Sigma$ is an invertible map for any $\im k\ge 0$.
As we discussed in the introduction, for the spin integral equation to be computationally useful we also need to show that
the Dirac scattering problem which we embed the Maxwell scattering problem into, 
is well posed.
We conjecture that this is the case for Algorithm~\ref{alg:maxscat}. It appears though that even for one bounded material, $N=1$,
this is beyond the currently available $L_2(\Sigma)$ solvability techniques, which are based on Rellich estimates like in
Proposition~\ref{prop:rellichest}, if we allow general Lipschitz interfaces.
To show the main problem, consider the jump relations 
\begin{align}
  \nu\wedg (\beta_1^{-1} T^+f_1 - \beta_0^{-1}T^+ f_0) &= \nu\wedg T^+g,\label{eq:maxtr1} \\   
    \nu\wedg (\alpha_1^{-1} T^-f_1 - \alpha_0^{-1}T^- f_0) &= \nu\wedg T^- g,\label{eq:maxtr2} \\
        \nu\lctr (\beta_1 T^+f_1 - \beta_0T^+ f_0) &= \nu\lctr T^+ g, \label{eq:maxtr3}  \\
        \nu\lctr (\alpha_1 T^-f_1 - \alpha_0T^- f_0) &= \nu\lctr T^-g, \label{eq:maxtr4}
\end{align}
on $\Sigma = \partial\Omega_1=\partial\Omega_0$ when $N=1$.
We want to show that $(f_1,f_0)\mapsto g$ is a Fredholm map in the $L_2(\Sigma)$ topology.
To this end we note that 
$$
  T^- f_j= T^- E_{k_j}^{(-1)^j} f_j= T^- E^{(-1)^j} f_j +Kf_j=  E^{(-1)^j} T^- f_j +Kf_j, \qquad j=0,1,
$$
where $E=E_0$ and $K=(-1)^j\tfrac 12 T^-(E_{k_j}-E)$ is a compact operator.
Adding $\alpha^+\alpha^-$ times \eqref{eq:maxtr2} and  \eqref{eq:maxtr4} 
yields the estimate
$$
  \|(\lambda I -EN)T^-(f_1+f_0)) \lesssim \|T^- g\|+ \|K'T^-(f_1+f_0)\|,
$$
with a compact operator $K'$.
This yields a Fredholm bound on $\|T^-f_1\|+  \|T^-f_0\|$, provided $\lambda= (\alpha^++\alpha^-)/(\alpha^+-\alpha^-)$
is outside the Fredholm spectrum of the rotation operator $EN$.
Similarly, by adding $\beta^+\beta^-$ times \eqref{eq:maxtr1} and  \eqref{eq:maxtr3}, we obtain a Fredholm
bound on  $\|T^+f_1\|+  \|T^+f_0\|$, provided $(\beta^++\beta^-)/(\beta^+-\beta^-)$
is outside the Fredholm spectrum of the rotation operator $EN$.
As shown in \cite{Ax1}, these conditions are equivalent to that $(\alpha^+/\alpha^-)^2$ and $(\beta^+/\beta^-)^2$ maps
into the Fredholm resolvent set for the cosine operator 
$$
   \tfrac 12(EN+NE)
$$
by the map $z\mapsto (z+1)/(z-1)$.
Thus, if we allow arbitrary conductivity $\sigma\ge 0$ and permittivity $\epsilon>0$, we need to know that the spectral 
radius of the cosine operator is $\le 1$.
As we have seen in Example~\ref{ex:layerembed} and Proposition~\ref{prop:opinvequiv}, the classical double layer potential 
operator embeds into this cosine operator, so in particular we need to know that the spectral radius of $K$ is at most one.
This well known spectral radius conjecture is to the authors knowledge still an open problem for general Lipschitz surfaces.

However, if furthermore $\Sigma$ is smooth, then the spectrum of $(EN+NE)/2$ is contained in the unit disk. Indeed
$$
  E^*f(x)-Ef(x)= 2\pv \int_\Sigma (\Psi(y-x)\nu(y)+ \nu(x)\Psi(y-x))f(y)\sigma(y)\approx 2 Kf(x),
$$
modulo compact operators is $\nu$ is smooth, since $ab+ba=2(a,b)$ with Clifford algebra.
Here the double layer potential $K$ acts componentwise on the multivector field $f$.
Since $E$ is a reflection operator it follows that $E$ is a compact perturbation of a unitary operator.
In particular we deduce that the Fredholm spectrum of the cosine operator is contained in $[-1,1]$.

To show injectivity of $B_\Sigma$ one can generalise the methods in Proposition~\ref{prop:Ninj}.
We omit the details and refer to \cite{Ax3}.

\section{Problems with the classical ansatz}  \label{sec:2d}

We end this paper with an explicit computation on a conical domain, elaborating on Mellin transform techniques of Fabes, Jodeit and Lewis~\cite{FJL},
that shows that the classical double layer potential equation
may have a condition number which is significantly worse than that of the underlying boundary value problem.
This in contrast to the spin integral equations proposed in Theorem~\ref{thm:main}, which typically is no
worse than the boundary value problem numerically.
By localising the result below, we obtain similar results for bounded domains which have corners.

Consider the double layer potential operator $K$ in dimension $2$ given by \eqref{eq:K2D} in the Introduction,
on a cone 
$$
   \Omega= \sett{z\in\C}{0<\arg z<\theta}.
$$
We are in particular interested in the limit as $\theta\to 0^+$.
Recall that $K$ is the composition of the restricted Cauchy projection \eqref{eqfactor1} and the restricted real part projection \eqref{eqfactor2},
and that all these operators are considered as real linear only.
The purpose of this example is to show by explicit computation, that
\begin{align*}
   \|(I+K)^{-1}\| &\approx 1/\theta^2,\\
   \|(N^+:E^+L_2\to N^+L_2)^{-1}\| & \approx 1/\theta,\quad\text{and}\\
   \|(E^+:N^+L_2\to E^+L_2)^{-1}\| & \approx 1/\theta
\end{align*}
as $\theta\to 0^+$.
This means that the interior Dirichlet problem which we intend to solve become increasingly illposed at the rate $1/\theta$.
On the other hand, the operator $I+K$ which is classically used to solve the Dirichlet problem become ill-conditioned at the faster 
rate $1/\theta^2$.
Note that the operators $K$, $E$ and $N$ are themselves uniformly bounded as $\theta\to 0^+$.

Since the computation uses the Fourier transform, it is natural to complexify these real linear operators, which we do as follows.
The imaginary unit $i$ used in the definition of $E$ and $N$, we write as $j$ and rather think of as the unit bivector $j=e_1\wedg e_2$ 
as in Example~\ref{ex:planeDirac},
which squares to $j^2=-1$ with the Clifford product. In the framework from Section~\ref{sec:higheralg},
this means that $N$ is reversion (rather than complex conjugation) and $N^+$ is projection onto scalars $\wedge^0\C^2\approx \C$.
Our operators act on functions taking values in the even subalgebra
$$
  \wedge^\ev\C^2= \sett{z+jw}{z,w\in\C},
$$
that is the commutative algebra of bicomplex numbers.

The first step is to apply the Mellin transform to $E$, meaning that we first pull back $E$ by the isometry
$$
  \gamma^*: L_2(\partial\Omega;\wedge^\ev\C^2)\to L_2(\R;(\wedge^\ev\C^2)^2):
  f(x)\mapsto \begin{bmatrix} e^{t/2}f(e^t) \\ e^{t/2}f(e^{t+j\theta}) \end{bmatrix},
$$
followed by the componentwise Fourier transform $\mF  f(\xi)= \int_\R f(t)e^{-i\xi t}dt$.
These computations, which involve some residue calculus using the imaginary unit $j$, lead to the formula
$$
  \begin{bmatrix}
     -ij\tanh(\pi\xi)  & \frac{e^{-j\alpha/2}}{\cosh(\pi\xi)}(j\cosh(\alpha\xi)-i\sinh(\alpha\xi)) \\
      \frac{e^{-j\alpha/2}}{\cosh(\pi\xi)}(-j\cosh(\alpha\xi)-i\sinh(\alpha\xi)) &  ij\tanh(\pi\xi)
  \end{bmatrix}
$$
for $  \mF \gamma^* E (\gamma^*)^{-1}\mF^{-1}$, where $\alpha:= \pi-\theta$.
From this we compute
$$
  N^+EN^+\approx
   \frac{\sin(\alpha/2-i\alpha\xi)}{\cosh(\pi\xi)} 
    \begin{bmatrix}
       0 & 1 \\ 1 & 0 
     \end{bmatrix},
$$
from which $ \|(I+K)^{-1}\|\approx 1/\theta^2$ follows.
To obtain the claimed $1/\theta$ bound on the inverses of the restricted projections, we note 
from the identity $\tfrac 12N(I+EN)N= N^+E^++N^-E^-$, the duality between 
$E^+:N^+L_2\to E^+L_2$ and $N^-: E^-L_2\to N^-L_2$, up to similarity as in Theorem~\ref{thm:ENFredwell},
and the uniform boundedness of $E$ and $N$ that it suffices to prove that 
$$
  \|(I+EN)^{-1}\|\approx 1/\theta.
$$
To this end, we use that the inverse of a matrix $A = \begin{bmatrix} a & b \\ c & d  \end{bmatrix}$, where $a,b,c,d$ in
general are non-commuting operators, is given by
\begin{equation}    \label{eq:noncomminv}
  A^{-1} =  \begin{bmatrix}
     (d-ca^{-1}b)^{-1} &  - (d-ca^{-1}b)^{-1}ca^{-1} \\
     -a^{-1}b  (d-ca^{-1}b)^{-1} & a^{-1}+a^{-1}b (d-ca^{-1}b)^{-1} ca^{-1}
  \end{bmatrix}.
\end{equation}
Applying this to $A$ equal to the Fourier multiplier of $I+EN$, it suffices for us to bound $a^{-1}$ and $ (d-ca^{-1}b)^{-1}$.
We calculate 
$$
  a^{-1}= (1-ij\tanh(\pi\xi) N)^{-1} = \frac {1+ij\tanh(\pi\xi)N}{1+\tanh^2(\pi\xi)},
$$
which is bounded and independent of $\theta$.
With straightforward bicomplex algebra, and noting that reversion $N$ commutes with $i$ but anti-commutes with $j$,  we also compute
$$
   d-ca^{-1}b= 1+ij\tanh(2\pi\xi) N +  (\cos(\alpha)+j\sin\alpha)\frac {\cosh(2\alpha\xi)-ij\sinh(2\alpha\xi)}{\cosh(2\pi\xi)}.
$$
To bound the inverse, we write matrices in the complex basis $\{1,j\}$ as $j= \begin{bmatrix} 0 & 1 \\ -1 & 0  \end{bmatrix}$
and $N=  \begin{bmatrix} 1 & 0 \\ 0 & -1  \end{bmatrix}$. 
From the standard commutative version of \eqref{eq:noncomminv}, we obtain
$$
  (d-ca^{-1}b)^{-1} =\frac 1{2+ 2X} \begin{bmatrix}
      1+ X & 
        i\tanh(2\pi\xi)+Y  \\
           i\tanh(2\pi\xi)-Y
     & 1+X
  \end{bmatrix},
$$
where
\begin{eqnarray*}
  X= \frac{\cos\alpha\cosh(2\alpha\xi)+i\sin\alpha\sinh(2\alpha\xi)}{\cosh(2\pi\xi)},\\
  Y=  \frac{i \cos\alpha\sinh(2\alpha\xi)-\sin\alpha\cosh(2\alpha\xi)}{\cosh(2\pi\xi)}.
\end{eqnarray*}
Doing the estimates, this yields $|(d-ca^{-1}b)^{-1}|\approx 1/\theta$, from which we deduce that the norms of the inverses of the
 restricted projections \eqref{eqfactor1} and  \eqref{eqfactor2}  are of the order $1/\theta$ in this example.

\bibliographystyle{acm}

\end{document}